\definecolor{Red}{rgb}{1,0,0}
\definecolor{Blue}{rgb}{0,0,1}
\definecolor{Olive}{rgb}{0.41,0.55,0.13}
\definecolor{Yarok}{rgb}{0,0.5,0}
\definecolor{Green}{rgb}{0,1,0}
\definecolor{MGreen}{rgb}{0,0.8,0}
\definecolor{DGreen}{rgb}{0,0.55,0}
\definecolor{Yellow}{rgb}{1,1,0}
\definecolor{Cyan}{rgb}{0,1,1}
\definecolor{Magenta}{rgb}{1,0,1}
\definecolor{Orange}{rgb}{1,.5,0}
\definecolor{Violet}{rgb}{.5,0,.5}
\definecolor{Purple}{rgb}{.75,0,.25}
\definecolor{Brown}{rgb}{.75,.5,.25}
\definecolor{Grey}{rgb}{.5,.5,.5}
\newcommand{\bfX}{{\bf X}}
\newcommand{\bfx}{{\bf x}}
\newcommand{\bfY}{{\bf Y}}
\newcommand{\bfZ}{{\bf Z}}
\newcommand{\pr}{\mathbb{P}}
\newcommand{\E}{\mathbb{E}}
\newcommand{\R}{\mathbb{R}}
\newcommand{\G}{\mathbb{G}}
\newcommand{\T}{\mathbb{T}}
\newcommand{\I}{{\bf \mathcal{I}}}
\newcommand{\Overlap}{\mathcal{\mathcal{O}}}
\newcommand{\ROverlap}{\mathcal{\mathcal{R}}}
\newcommand{\mb}[1]{\mbox{\boldmath $#1$}}
\newcommand{\ignore}[1]{\relax}
\newtheorem{theorem}{Theorem}[section]
\newtheorem{lemma}[theorem]{Lemma}
\newtheorem{conj}[theorem]{Conjecture}
\newtheorem{proposition}[theorem]{Proposition}
\newcommand{\ER}{Erd{\"o}s-R\'{e}nyi }
\newcommand{\Lovasz}{Lov\'{a}sz}
\definecolor{Red}{rgb}{1,0,0}
\definecolor{Blue}{rgb}{0,0,1}
\definecolor{Olive}{rgb}{0.41,0.55,0.13}
\definecolor{Green}{rgb}{0,1,0}
\definecolor{MGreen}{rgb}{0,0.8,0}
\definecolor{DGreen}{rgb}{0,0.55,0}
\definecolor{Yellow}{rgb}{1,1,0}
\definecolor{Cyan}{rgb}{0,1,1}
\definecolor{Magenta}{rgb}{1,0,1}
\definecolor{Orange}{rgb}{1,.5,0}
\definecolor{Violet}{rgb}{.5,0,.5}
\definecolor{Purple}{rgb}{.75,0,.25}
\definecolor{Brown}{rgb}{.75,.5,.25}
\definecolor{Grey}{rgb}{.5,.5,.5}
\definecolor{Pink}{rgb}{1,0,1}
\definecolor{DBrown}{rgb}{.5,.34,.16}
\definecolor{Black}{rgb}{0,0,0}
\author{
{\sf David Gamarnik}\thanks{MIT; e-mail: {\tt gamarnik@mit.edu}.Research supported  by the NSF grants CMMI-1031332.}
\and
{\sf Madhu Sudan}\thanks{Microsoft Research New England; e-mail: {\tt madhu@mit.edu}}
}
\begin{document}

\title{Limits of local algorithms over sparse random graphs}
\date{}

\maketitle

\begin{abstract}
Local algorithms on graphs are algorithms that run in parallel on
the nodes of a graph to compute some global structural feature
of the graph. Such algorithms use only local information available
at nodes to determine local aspects of the global structure, while
also potentially using some randomness.
Recent research has shown that such
algorithms show significant promise in computing structures like large
independent sets in graphs locally.
Indeed the promise led to a conjecture
by Hatami, \Lovasz~and Szegedy~\cite{HatamiLovaszSzegedy} that local algorithms
may be able to compute maximum independent sets in (sparse) random
$d$-regular graphs.
In this paper we refute this conjecture and show that every independent set produced by local algorithms
is multiplicative factor $1/2+1/(2\sqrt{2})$ smaller than the largest, asymptotically as $d\rightarrow\infty$.

Our result is based on an important clustering phenomena predicted first in the literature on spin glasses,
and recently proved rigorously for a variety of constraint satisfaction problems on random graphs.
Such properties suggest that the geometry of the solution space can be
quite intricate.
The specific clustering property,
that we prove and apply in this paper
shows that typically every two large independent sets in a random graph
either have a significant intersection, or have a nearly empty
intersection. As a result, large independent sets are clustered according to the proximity to each other. While
the clustering property was postulated earlier as an obstruction for the success of local algorithms, such as for example, the Belief Propagation algorithm,
our result is the first one where the clustering property is used to
formally prove limits on local algorithms.
\end{abstract}

\section{Introduction}
Local algorithms are decentralized algorithms that run in parallel
on nodes in a network using only information available from
local neighborhoods to compute some global function of data that
is spread over the network. Local algorithms have been studied in
the past in various communities. They arise as natural solution concepts
in distributed computing (see, e.g., ~\cite{Linial}).
They also lead to efficient sub-linear
algorithms --- algorithms that run in time significantly less than the
length of the input --- and \cite{ParnasRon,NguyenOnak,hassidimetal,ronittetal}
illustrate some of the progress in this direction.
Finally local algorithms have also been
proposed as natural heuristics
for solving hard optimization problems with the popular Belief Propagation
algorithm (see for instance ~\cite{JordanWainwright,MezardMontanariBook})
being one such example.

In this work we study the performance of a natural class of local
algorithms on {\em random regular graphs} and show limits on the
performance of these algorithms.
The motivation for our work comes from the
a notion of local algorithms that has appeared in a completely
different mathematical context, namely that of the theory of graph
limits, developed in several papers,
including~\cite{BorgsChayesEtAlGraphLimitsI},\cite{BorgsChayesEtAlGraphLimitsII},\cite{LovaszSzegedy},\cite{BorgsChayesKahnLovasz},\cite{borgs2013convergent},\cite{elek2010borel},
\cite{HatamiLovaszSzegedy}.
In the realms of this theory it was conjectured that every
``reasonable" combinatorial optimization problem on {\em random graphs}
can be solved by means of some local algorithms.
To the best of our knowledge this conjecture for the first time was formally stated in Hatami, \Lovasz~ and Szegedy in~\cite{HatamiLovaszSzegedy},
and thus, from now on, we will refer to it as Hatami-\Lovasz-Szegedy (or HLS) conjecture, though informally it was posed by Szegedy earlier, and was referenced in several papers,
including Lyons and Nazarov~\cite{lyons2011perfect}, and Cs\'{o}ka and Lippner~\cite{csoka2012invariant}.
In a concrete context of the problem of finding largest independent sets
in sparse random regular graphs, the conjecture is stated as follows. Let $\T_{d,r}$ be a rooted $d$-regular tree with depth $r$. Namely, every node including the root,
has degree $r$, except for the leaves, and the distance from the root to every leaf is $r$. Consider a function $f_r:[0,1]^{\T_{d,r}}\rightarrow \{0,1\}$
which maps every such tree whose nodes are decorated
with real values from $[0,1]$ to a "decision" encoded by $0$ and $1$. In light of the fact that in a random $d$-regular graph $\G_d(r)$
the typical node has depth-$r$ neighborhood isomorphic to $\T_{d,r}$, for any constant $r$,
such a function $f_r$ can be used to generate (random) subsets $I$ of $\G_d(r)$ as follows: decorate nodes of $\G_d(r)$ using i.i.d. uniform random values from $[0,1]$ and
apply function $f_r$ in every node. The set of nodes for which $f_r$ produces value $1$ defines $I$, and is called "i.i.d. factor".
It is clear that $f_r$ essentially describes a local algorithm for producing sets $I$ (sweeping issue of computability of $f_r$ under the rug).
The HLS conjecture postulates the existence of
a sequence of $f_r, r=1,2,\ldots$, such that the set $I$ thus produced is an independent subset of $\G_d(r)$ and asymptotically achieves the largest possible value as $r\rightarrow\infty$.
Namely, largest independent subsets of random regular graphs are i.i.d. factors.
The precise connection between this conjecture and the theory of graph limits  is beyond the scope of this
paper. Instead we refer the reader to the relevant papers~\cite{HatamiLovaszSzegedy},\cite{elek2010borel}. The concept of i.i.d. factors appears also in one of the open
problem by David Aldous~\cite{Aldous:FavoriteProblemsNew} in the context of coding invariant processes on infinite trees.

It turns out that an analogue for the HLS conjecture is indeed valid for another important combinatorial
optimization problem - matching. Lyons and Nazarov~\cite{lyons2011perfect}
established it for the case of bi-partite locally $\T_{d,r}$-tree-like graphs, and Cs\'{o}ka and Lippner established this result for general locally $\T_{d,r}$-tree-like graphs.
Further, one can modify the framework of i.i.d. factors by encapsulating non-$\T_{d,r}$ type neighborhoods, for example by making $f_r$ depend not only on the realization
of random uniform in $[0,1]$ values, but also on the realization of the graph-theoretic neighborhoods around the nodes.
Some probabilistic bound on a degree might be needed to make this definition rigorous
(though we will not attempt this formalization in this paper). In this case one can consider, for example, i.i.d. factors when  neighborhoods
are distributed as $r$ generations of a branching
process with Poisson distribution, and then ask which combinatorial optimization problems defined now on sparse \ER graphs $\G(n,d/n)$ can be solved as i.i.d. factors.
Here $\G(n,d/n)$ is a random graph on $n$ nodes with each of the ${n\choose 2}$ edges selected with probability $d/n$, independently for all edges, and $d>0$ is a fixed constant.
In this case it is possible to show that when $c\le e$, the maximum independent set problem on $\G(n,d/n)$ can be solved nearly optimally by the well known Belief Propagation (BP)
algorithm with constantly many rounds. Since the BP is a local algorithm, then the maximum independent set on $\G(n,d/n)$ is an i.i.d.
factor, in the extended framework defined above.
(We should note that the original proof of Karp and Sipser~\cite{KarpSipser} of the very similar result, relied on a different method.)
Thus, the framework of local algorithms viewed as i.i.d. factors is rich enough to solve several interesting combinatorial optimization problems.

Nevertheless, in this paper we refute the HLS conjecture in the context of maximum independent set problem on random regular graphs $\G_d(n)$.
Specifically, we show that for large enough $d$, with high probability as $n\rightarrow\infty$,
every independent set producible as an i.i.d. factor  is a multiplicative factor $\gamma<1$ smaller than
the largest independent subset of $\G_d(n)$. We establish that $\gamma$ is asymptotically at most ${1\over 2}+{1\over 2\sqrt{2}}$ (though we conjecture that the result
holds simply for $\gamma=1/2$, as we discuss in the body of the paper).

Our result is based on a powerful, though fairly simple to establish in our case,  so-called
\emph{clustering} or \emph{shattering} property of some combinatorial optimization problem on random graphs, first conjectured
in the theory of spin glasses and later confirmed by rigorous means. For the first time
this clustering property was discussed in terms of so-called overlap structure of the solutions of the Sherrington-Kirkpatrick model~\cite{TalagrandBook}. Later, it featured
in the context of random K-SAT problem and was proved rigorously by Achlioptas, Coja-Oghlan and Ricci-Tersenghi~\cite{AchlioptasCojaOghlanRicciTersenghi},
and by Mezard, Mora and Zecchina~\cite{mezard2005clustering}, independently. We do not define the random K-SAT problem here and instead refer the reader to the aforementioned papers.
What these results state is that in certain regimes, the set of satisfying assignments, with high probability, can be clustered into groups such that two solutions within the same
cluster agree on a certain minimum number of variables,
while two solutions from different clusters have to disagree on a certain minimum number of variables. In particular, one can identify a certain
non-empty interval $[z_1,z_2]\subset [0,1]$ such that \emph{no} two solutions of the random K-SAT problem agree on precisely $z$ fraction of variables for all $z\in [z_1,z_2]$.
One can further show that the onset of clustering property occurs when the density of clauses to variables becomes at least $2^K/K$, while at the same time
the formula remains satisfiable
with high probability, when the density is below $2^K\log 2$. Interestingly, the known algorithms for finding solutions of random instances of K-SAT problem
also stop working around the $2^K/K$ threshold. It was widely conjectured that the onset of the clustering phase is the main obstruction for finding such algorithms.
In fact, Coja Oghlan~\cite{coja2011belief} showed that the BP algorithm, which was earlier conjectured to be a good contender for solving the random instances of K-SAT problems,
also fails when the density of clauses to variables is at least $2^K\log K/K$, though Coja-Oghlan's approach does not explicitly rely on the clustering
property, and one could argue that the connection between the clustering property and the failure of the BP algorithm is coincidental.

Closer to the topic of this paper, the
clustering property was also recently established for independent sets in \ER graphs. In particular Coja-Oghlan and Efthymiou~\cite{coja2011independent}
established the following result. It is known that the largest independent subset of $\G(n,d/n)$ has size approximately
$(2\log d/d)n$, when $d$ is large, see the next section for precise details. The authors of~\cite{coja2011independent} show that
the set of independent sets of size at least approximately $(\log d/d)n$ (namely those within factor $1/2$ of the optimal), are also clustered. Namely, one
can split them into groups such that intersection of two independent sets within a group has a large cardinality, while intersection of two independent sets
from different groups has a small cardinality. One should note that algorithms for producing large independent subsets of random graphs also stop short
factor $1/2$ of the optimal, both in the case of sparse and in the dense random graph cases,
as exhibited by the well-known Karp's open problem regarding independent subsets of $\G(n,1/2)$~\cite{AlonSpencer}.

This is almost the result we need for our analysis with two exceptions. First, we need to establish this clustering property for random regular as opposed \ER graphs.
Second, the result in~\cite{coja2011independent} applies to \emph{typical} independent sets and does not rule out the possibility that there are two independent sets
with some "intermediate" intersection cardinality, though the number of such pairs is insignificant compared to the total number of
independent sets. For our result we need to show that, without exception, every
pair of "large" independent sets has either large or small intersection. We indeed establish this, but at the cost of loosing additional factor $1/(2\sqrt{2})$.
In particular, we show that for large enough $d$, with high probability as $n\rightarrow\infty$,
every two independent subsets of $\G_d(n)$ with cardinality asymptotically $(1+\beta)(\log d/d)n$, where $1\ge \beta> {1\over 2}+{1\over 2\sqrt{2}}$ either
have intersection size at least $(1+z)(\log d/d)n$ or at most $(1-z)(\log d/d)n$, for some $z<\beta$. The result is established using a straightforward
first moment argument: we compute the expected number of pairs of independent sets with intersection lying in the interval $[(1-z)(\log d/d)n,(1+z)(\log d/d)n]$,
and show that this expectation converges to zero exponentially fast.

With this result at hand, the refutation of the HLS conjecture is fairly simple to derive. We prove that if local algorithms can construct
independent sets of size asymptotically $(1+\beta)(\log d/d)n$, then, by means of a simple coupling construction, we can construct two independent sets with intersection
size $z$ for \emph{all} $z$ in the interval $[(1+\beta)^2(\log d/d)^2n,(1+\beta)(\log d/d)n]$, clearly violating the clustering property.
The additional factor $1/(2\sqrt{2})$ is an artifact of the analysis, and hence we believe that our result holds for all $\beta\in (0,1]$. Namely, no local
algorithm is capable of producing independent sets with size larger than factor $1/2$ of the optimal, asymptotically in $d$. We note again that this coincides
with the barrier for known algorithms. It is noteworthy that our result is the first one where algorithmic hardness derivation relies directly on the
the geometry of the solution space, vis-a-vis the clustering phenomena, and thus the connection between algorithmic hardness and clustering property is not coincidental.

The remainder of the paper is structured as follows. We introduce
some basic material  and the HLS conjecture
in the next section. In the same
section we state our main theorem --- non-validity of the conjecture
(Theorem~\ref{theorem:MainResult}).
We also state two secondary theorems, the first describing the
overlap structure of independent sets in random graphs (Theorem~\ref{thm:cluster}) - the main tool in the proof of our result, and
the second describing overlaps that can be found if local algorithms
work well (Theorem~\ref{thm:couple}).
We prove our main theorem easily from the two secondary theorems
in Section~\ref{section:ProofOfMainResult}.
We prove Theorem~\ref{thm:couple} in Section~\ref{sec:couple}.
Sections~\ref{section:Proof.ER} and~\ref{section:RandomRegGraph} are devoted
to proofs of the theorem regarding the overlap property, for the case of \ER and random regular graph, respectively. While technically we do not need such a result for the \ER
graph, it is very simple to derive and provides the roadmap for the case
of the regular graphs (where the calculations are a bit more tedious).
The \ER case might also be useful for further studies of i.i.d. factors
on \ER graphs as opposed to random regular graphs, in the framework
described above.


\section{Preliminaries and main result}\label{section:preliminaries}
For convenience, we repeat here some of the notions and definitions already introduced in the first section.

\paragraph{Basic graph terminology}
All graphs in this paper are understood to be simple undirected graphs.
Given a graph $\G$ with node set $V(\G)$ and edge set
$E(\G)$, a subset of nodes $I\subset V(\G)$ is an independent set if $(u,v)\notin E(\G)$ for all $u,v\in I$.
A path between nodes $u$ and $v$ with length $r$ is a sequence of nodes $u_1,\ldots,u_{r-1}$ such that $(u,u_1),(u_1,u_2),\ldots,(u_{r-1},v)\in E(\G)$.
The distance between nodes $u$ and $v$ is the length of the shortest path between them.
For every positive integer value $r$ and every node $u\in V(\G)$, $B_{\G}(u,r)$ denotes the depth-$r$ neighborhood of $u$ in $\G$. Namely,
$B_{\G}(u,r)$ is the subgraph of $\G$ induced by nodes $v$ with distance at most $r$ from $u$.
When $\G$ is clear from context we drop the subscript.
The degree of a vertex $u \in V(\G)$ is the number of vertices
$v$ such that $(u,v) \in E(\G)$. The degree of a graph $\G$ is the
maximum degree of a vertex of $\G$.
A graph $\G$ is $d$-regular if the degree of every node is $d$.

\paragraph{Random graph preliminaries}
Given a positive real $d$,
$\G(n,d/n)$ denotes the \ER graph on $n$ nodes $\{1,2,\ldots,n\}\triangleq [n]$,
with edge probability $d/n$. Namely each of the ${n\choose 2}$ edges of a complete graph on $n$ nodes belongs
to $E(\G(n,d/n))$ with probability $d/n$, independently for all edges. Given a positive integer $d$, $\G_d(n)$ denotes
a graph chosen uniformly at random from the space of all $d$-regular graphs on $n$ nodes. This definition is meaningful only
when $nd$ is an even number, which we assume from now on. Given a positive integer $m$, let $\I(n,d,m)$ denote the
set of all independent sets in $\G(n,d/n)$ with cardinality $m$. $\I_d(n,m)$ stands for a similar set for the case of random regular graphs.
Given integers $0\le k\le m$, let $\Overlap(n,d,m,k)$ denote the set of pairs $I,J\in\I(n,d,m)$ such that $|I\cap J|=k$. The definition of the
set $\Overlap_d(n,m,k)$ is similar. The  sizes of the sets $\Overlap(n,d,m,k)$ and $\Overlap_d(n,m,k)$, and in particular whether these sets are empty
or not, is one of our focuses.

Denote by $\alpha(n,d)$ the size of a largest in cardinality independent subset of $\G(n,d/n)$, normalized by $n$. Namely,
\begin{align*}
\alpha(n,d)=n^{-1}\max\{m: \I(n,d,m)\ne\emptyset\}.
\end{align*}
$\alpha_d(n)$ stands for the similar quantity for random regular graphs. It is known that $\alpha(n,d)$ and $\alpha_d(n)$ have deterministic limits as $n\rightarrow\infty$.
\begin{theorem}\label{theorem:LargestIS}
For every $d\in\R_+$ there exists $\alpha(d)$ such that w.h.p. as $n\rightarrow\infty$,
\begin{align}\label{eq:BGT}
\alpha(n,d)\rightarrow \alpha(d).
\end{align}
Similarly, for every positive integer $d$ there exists $\alpha_d$ such that w.h.p. as $n\rightarrow\infty$
\begin{align}\label{eq:BGTregular}
\alpha_d(n)\rightarrow \alpha_d.
\end{align}
Furthermore
\begin{align}
\alpha(d)&={2\log d\over d}(1-o(1)) \label{eq:FriezeLimit},\\
\alpha_d&={2\log d\over d}(1-o(1))\label{eq:FriezeLimitReg},
\end{align}
as $d\rightarrow\infty$.
\end{theorem}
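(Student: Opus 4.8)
The statement splits into two claims: the existence of the deterministic limits $\alpha(d)$ and $\alpha_d$ (equations~\eqref{eq:BGT} and~\eqref{eq:BGTregular}), and the identification of their common asymptotics~\eqref{eq:FriezeLimit}--\eqref{eq:FriezeLimitReg} as $d\to\infty$. I would prove these separately.

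\textbf{Existence of the limits.} Write $A(\G)$ for the unnormalized independence number, so that $\alpha(n,d)=n^{-1}A(\G(n,d/n))$. Exposing the vertices of $\G(n,d/n)$ one at a time together with their back-edges yields a Doob martingale for $A$ with bounded differences $1$ at each of the $n$ steps (changing the edges incident to a single vertex moves $A$ by at most one), so the Azuma--Hoeffding inequality gives $\pr(|A(\G(n,d/n))-\E A(\G(n,d/n))|>t)\le 2\exp(-t^2/(2n))$; hence $\alpha(n,d)$ lies within $O(n^{-1/2})$ of its mean w.h.p. It then remains to show $n^{-1}\E A(\G(n,d/n))$ converges, which I would do by the interpolation method of Bayati, Gamarnik and Tetali: construct a family of random graphs interpolating between $\G(n_1+n_2,d/(n_1+n_2))$ and the disjoint union of independent copies of $\G(n_1,d/n_1)$ and $\G(n_2,d/n_2)$, Poissonizing edge counts so that $\E A$ moves in one direction along the interpolation up to $o(n)$ error, thereby obtaining approximate superadditivity of $\E A$ in $n$ and, via Fekete's lemma, the limit $\alpha(d)$. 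Together with concentration this proves~\eqref{eq:BGT}. For~\eqref{eq:BGTregular} one repeats the argument inside the configuration model on $nd$ half-edges: a vertex-exposure martingale again gives $O(n^{-1/2})$ concentration, the interpolation is carried out preserving the degree sequence, and since a uniform random pairing is simple with probability bounded away from $0$ the conclusion transfers to $\G_d(n)$.

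\textbf{Asymptotics.} The upper bound is a first moment estimate: $\E|\I(n,d,m)|=\binom nm(1-d/n)^{\binom m2}\le\exp\!\big(m(1+\log(n/m))-dm(m-1)/(2n)\big)$, and taking $m=(1+\varepsilon)\tfrac{2\log d}{d}\,n$ makes this bound tend to $0$ for every large enough $d$, so w.h.p. $\I(n,d,m)=\emptyset$ and $\alpha(d)\le(1+\varepsilon)\tfrac{2\log d}{d}$. The matching lower bound is the heart of the matter: a greedy or pure first moment argument only delivers $\tfrac{\log d}{d}\,n$, a factor-$2$ loss. To recover the missing factor I would use the second moment method (this is Frieze's theorem for $\G(n,d/n)$ and the analogue of Frieze and \L{}uczak for $\G_d(n)$): with $m=(1-\varepsilon)\tfrac{2\log d}{d}\,n$ and $X=|\I(n,d,m)|$, the first moment formula gives $\E X\to\infty$, and one must establish $\E[X^2]=\sum_{k=0}^{m}\E|\Overlap(n,d,m,k)|\le(1+o(1))(\E X)^2$, after which the second moment inequality $\pr(X>0)\ge(\E X)^2/\E[X^2]$ forces $\pr(X>0)\to1$; combined with the first part and the upper bound this yields~\eqref{eq:FriezeLimit}, and the identical computation inside the configuration model (transferred as above) yields~\eqref{eq:FriezeLimitReg}.

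\textbf{Main obstacle.} The delicate step is the second moment estimate for the lower bound: one has to show that the overlap sum $\sum_k\E|\Overlap(n,d,m,k)|$ is dominated by the ``independent'' overlap $k\approx m^2/n$ and that the remaining terms --- both the nearly-disjoint and the heavily-overlapping ranges --- do not inflate it beyond $(1+o(1))(\E X)^2$. This is subtle precisely because near the threshold the ratio $\E[X^2]/(\E X)^2$ is only barely bounded, so the tail of the overlap sum must be controlled with care --- and these are exactly the overlap computations that reappear, in a sharpened form, in Theorem~\ref{thm:cluster}. By contrast, the martingale concentration, the interpolation for existence, and the first moment upper bound are all routine.
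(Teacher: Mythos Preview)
The paper does not prove this theorem; immediately after the statement it attributes the convergence~\eqref{eq:BGT}--\eqref{eq:BGTregular} to Bayati, Gamarnik and Tetali~\cite{BayatiGamarnikTetali} and the asymptotics~\eqref{eq:FriezeLimit}--\eqref{eq:FriezeLimitReg} to Frieze~\cite{FriezeIndependentSet} and Frieze--{\L}uczak~\cite{frieze1992independence}, noting that the latter two papers already gave the limits in the $\limsup_n/\liminf_n$ sense. Your proposal is therefore not in competition with any argument in the paper but is a sketch of the proofs in those cited works, and on that score it is faithful: the interpolation scheme you describe is precisely the BGT method, and the first-moment upper bound together with a second-moment lower bound is Frieze's route (with the configuration-model transfer for the regular case).

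One small correction worth flagging: in the standard second-moment argument the ratio $\E[X^2]/(\E X)^2$ at $m=(1-\varepsilon)\tfrac{2\log d}{d}n$ is typically shown to be bounded, not $1+o(1)$, so Paley--Zygmund only yields $\pr(X>0)\ge c>0$ for some $c=c(d,\varepsilon)$. It is the concentration step you already established that then upgrades positive probability to w.h.p.\ --- which is exactly what you say in the next clause (``combined with the first part''). So your logic is correct; just weaken the claimed bound on the second moment. Your closing remark that the overlap calculation here is a precursor to the sharper one in Theorem~\ref{thm:cluster} is apt and matches the paper's own use of $\E|\Overlap(n,d,m,k)|$ in Section~\ref{section:Proof.ER}.
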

The convergence (\ref{eq:BGT}) and (\ref{eq:BGTregular}) was established in Bayati, Gamarnik and Tetali~\cite{BayatiGamarnikTetali}. The limits
(\ref{eq:FriezeLimit}) and (\ref{eq:FriezeLimitReg}) follow from much older results by Frieze~\cite{FriezeIndependentSet} for the case of \ER graphs and by
Frieze and {\L}uczak~\cite{frieze1992independence} for the case of random regular graphs, which established these limits in the $\limsup_n$ and $\liminf_n$
sense.
The fallout of these results is that graphs $\G(n,d/n)$ and $\G_d(n)$
have independent sets of size up to approximately $(2\log d/d)n$, when $n$ and $d$ are large, namely in the doubly asymptotic sense when we first take $n$ to infinity
and then $d$ to infinity.

\paragraph{Local graph terminology}
A {\em decision function}
is a measurable function
$f = f(u,\G,\bfx)$ where
$\G$ is a graph on vertex set $[n]$ for some positive integer $n$,
$u \in [n]$ is a vertex and
$\bfx \in [0,1]^N$ is a sequence of real numbers for some $N \geq n$ and returns a
Boolean value $\{0,1\}$.
A decision function $f$ is said to compute an independent
set if for every graph $\G$ and every sequence $\bfx$ and for
every pair $(u,v) \in E(\G)$ it is the case that either
$f(u,\G,\bfx)= 0$ or $f(v,\G,\bfx) = 0$, or both.
We refer to such an $f$ as an independence function.
For an independence function $f$, graph $\G$ on vertex
set $[n]$ and $\bfx \in [0,1]^N$ for $N \geq n$,
we let $I_\G(f,\bfx)$ denote the independent set of $\G$ returned
by $f$, i.e., $I_\G(f,\bfx) = \{u \in [n] \mid f(u,\G,\bfx) = 1\}$. We will assume later that $X$ is chosen randomly according to some probability distribution.
In this case $I_\G(f,\bfx)$ is a randomly chosen independent set in $\G$.

We now define the notion of a ``local'' decision function, i.e., one whose actions depend only on the local structure of a graph and the local randomness. The definition is a natural one, but we formalize it below for completeness.
Let $\G_1$ and $\G_2$ be graphs on vertex sets $[n_1]$ and $[n_2]$ respectively. Let $u_1 \in [n_1]$ and $u_2 \in [n_2]$.
We say that $\pi:[n_1] \to [n_2]$ is an $r$-local isomorphism mapping $u_1$ to $u_2$ if $\pi$ is a graph isomorphism from $B_{\G_1}(u_1,r)$ to $B_{\G_2}(u_2,r)$ (so in particular it is a bijection from $B_{\G_1}(u_1,r)$ to $B_{\G_2}(u_2,r)$, and further it preserves adjacency within $B_{\G_1}(u_1,r)$ and $B_{\G_2}(u_2,r)$).
For $\G_1,\G_2,u_1,u_2$ and an $r$-local isomorphism $\pi$, we say sequences $x^{(1)} \in [0,1]^{N_1}$ and $x^{(2)} \in [0,1]^{N_2}$ are $r$-locally equivalent if for every $v \in B_{\G_1}(u_1,r)$ we have $x^{(1)}_v = x^{(2)}_{\pi(v)}$.
Finally we say $f(u,\G,x)$ is an $r$-local function if for every pair of graphs $\G_1,\G_2$, for every pair of vertices
$u_1 \in V(\G_1)$ and $u_2 \in V(\G_2)$, for every $r$-local isomorphism $\pi$ mapping $u_1$ to $u_2$ and $r$-locally equivalent sequences $x^{(1)}$ and $x^{(2)}$ we have
$f(u_1,\G_1,x^{(1)}) =
f(u_2,\G_2,x^{(2)})$.
We often use the notation $f_r$ to denote an $r$-local function.

Let $n_{d,r} \triangleq 1 + d \cdot ((d-1)^r - 1)/(d-2)$ denote
the number of vertices in a rooted tree of degree $d$ and depth
$r$.
We let $\T_{d,r}$ denote a canonical rooted tree on vertex set $[n_{d,r}]$ with
root being $1$. For $n \geq n_{d,r}, \bfx \in [0,1]^n$ and
an $r$-local function $f_r$, we let $f_r(\bfx)$ denote the
quantity $f_r(1,\T_{d,r},\bfx)$.
Let $\bfX$ be chosen according to a uniform distribution on $[0,1]^n$. The set subset of nodes $I_{\G_d(n)}(f_r,\bfX)$
is called \emph{i.i.d. factor} produced by the $r$-local function $f_r$.
As we will see below the $\alpha(f_r) \triangleq \frac1n \cdot \E_{\bfX}[f_r(\bfX)]$ accurately
captures (to within an additive $o(1)$ factor) the density of an independent
returned by an $r$-local independence function $f_r$ on $\G_d(n)$.

First we recall the following folklore proposition which we will also use often in this paper.
\begin{proposition}
\label{prop:local-tree}
As $n \to \infty$, with probability tending to $1$ almost all local neighborhoods
in $\G_d(n)$ look like a tree. Formally, for every $d$, $r$ and $\epsilon$, for sufficiently
large $n$,
$$\pr_{\G_d(n)} \left( | \{u \in [n] \mid B_{\G_d(n)}(u,r) \not\cong \T_{d,r}  \}| \geq \epsilon n \right) \leq \epsilon.$$
\end{proposition}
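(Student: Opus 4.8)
The plan is to pass to the configuration (pairing) model of $\G_d(n)$, to bound the expected number of \emph{bad} vertices --- a vertex $u$ being bad if $B(u,r)\not\cong\T_{d,r}$ --- by a constant depending only on $d$ and $r$, and then to conclude by Markov's inequality together with the standard transfer from the pairing model to the uniform random $d$-regular graph.

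First I would record an elementary structural observation. Since every vertex of $\G_d(n)$ has degree exactly $d$, the neighborhood $B(u,r)$ is isomorphic to $\T_{d,r}$ if and only if the subgraph induced on the vertices within distance $r$ of $u$ is acyclic; and if that subgraph contains a cycle, then it contains one of length at most $2r+1$. Indeed, if $\{a,b\}$ is an edge not lying on a fixed breadth-first tree rooted at $u$, then the tree path from $u$ to $a$, the edge $\{a,b\}$, and the tree path from $b$ back to $u$ together form a closed walk of length at most $2r+1$ lying inside $B(u,r)$, hence contain a cycle of length at most $2r+1$ inside $B(u,r)$. (Self-loops and double edges of the pairing model count here as cycles of lengths $1$ and $2$.) Finally, if a cycle $C$ lies inside $B(u,r)$ then every vertex of $C$ is within distance $r$ of $u$, so the number of such $u$ is at most $n_{d,r}$. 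Writing $\#\{\text{bad }u\}$ for the number of bad vertices and summing over cycles of length at most $2r+1$, we obtain
\begin{align*}
\#\{\text{bad }u\}\;\le\; n_{d,r}\cdot\#\{\text{cycles of length at most }2r+1\}.
\end{align*}

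Next I would invoke the classical first-moment estimate in the pairing model: for each fixed $d$ and $\ell$, the expected number of cycles of length $\ell$ converges to $(d-1)^\ell/(2\ell)$ as $n\to\infty$, and in particular is bounded by a constant $C(d,\ell)$ uniformly in $n$. Hence $\E[\#\{\text{bad }u\}]\le n_{d,r}\sum_{\ell=1}^{2r+1}C(d,\ell)$, a constant $M(d,r)$ not depending on $n$, and Markov's inequality gives $\pr(\#\{\text{bad }u\}\ge\epsilon n)\le M(d,r)/(\epsilon n)$ in the pairing model. Since the pairing model produces a simple graph with probability bounded below by a positive constant $c(d)$, and, conditioned on simplicity, is distributed exactly as $\G_d(n)$, we conclude $\pr_{\G_d(n)}(\#\{\text{bad }u\}\ge\epsilon n)\le M(d,r)/(c(d)\,\epsilon n)$, which is below $\epsilon$ once $n\ge M(d,r)/(c(d)\,\epsilon^{2})$. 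This is the claimed bound.

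There is no genuine obstacle here --- this is precisely why the statement is ``folklore'' --- and the only point needing (entirely routine) care is the first-moment cycle count in the pairing model, namely tracking the normalisation $(dn-1)(dn-3)\cdots$ and verifying both the limit and the uniform-in-$n$ bound. An alternative and equally standard route avoids cycle counting altogether: exploring $B(u,r)$ by revealing pairings in breadth-first order, one touches at most $d\,n_{d,r}$ half-edges, each freshly revealed pairing lands on an already-touched half-edge with probability $O_{d,r}(1/n)$, so $\pr(u\text{ bad})=O_{d,r}(1/n)$ and $\E[\#\{\text{bad }u\}]=O_{d,r}(1)$ just as before; one may also simply cite Benjamini--Schramm local convergence of $\G_d(n)$ to the infinite $d$-regular tree.
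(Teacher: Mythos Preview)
The paper does not actually supply a proof of this proposition: it is introduced as ``the following folklore proposition'' and stated without argument, so there is no approach in the paper to compare yours against. Your proof is correct and is one of the standard ways to establish this fact. The structural reduction (bad neighborhood $\Leftrightarrow$ short cycle nearby, with each short cycle blamed by at most $n_{d,r}$ vertices) is clean, the first-moment bound on short cycles in the configuration model is the classical Bollob\'as computation, and the transfer via the $\Omega_d(1)$ probability of simplicity is the usual contiguity step. The alternative breadth-first exploration argument you sketch at the end is, if anything, the more common textbook route and would have been equally acceptable.
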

This immediately implies that the expected value of the independent set $I_{\G_d(n)}(f_r,\bfX)$ produced by $f_r$ is $\alpha(f_r)n+o(n)$.
In fact the following concentration result holds.

\begin{proposition}
\label{prop:variance}
As $n \to \infty$, with probability tending to $1$ the independent set
produced by a $r$-local function $f$ on $\G_d(n)$ is of size $\alpha(f) \cdot n + o(n)$.
Formally, for every $d$, $r$, $\epsilon$ and every $r$-local function $f$,
for sufficiently large $n$,
$$\pr_{\G_d(n),\bfX \in [0,1]^N} \left( | |I_{\G_d(n)}(f_r,\bfX)| - \alpha(f_r)n | \geq \epsilon n \right) \leq \epsilon.$$
\end{proposition}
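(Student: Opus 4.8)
The plan is to prove this by a standard second moment computation followed by Chebyshev's inequality. Write $Z=|I_{\G_d(n)}(f_r,\bfX)|=\sum_{u=1}^{n}\mathbf{1}\{f_r(u,\G_d(n),\bfX)=1\}$, where the random graph $\G_d(n)$ and the random labels $\bfX\in[0,1]^N$ are independent, and below abbreviate $f_r(u,\G,\bfX)$ by $f_r(u)$. The goal is to establish $\E[Z]=\alpha(f_r)\,n+o(n)$ and $\mathrm{Var}(Z)=o(n^2)$, from which the claim follows routinely.

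Two consequences of $r$-locality drive the computation. First, taking $\G_1=\G_2=\G$, $u_1=u_2=u$ and $\pi=\mathrm{id}$ in the definition of an $r$-local function shows that $f_r(u,\G,\bfx)$ depends on $\bfx$ only through the coordinates indexed by $B_\G(u,r)$. Second, if $B_\G(u,r)\cong\T_{d,r}$, then taking one of the two graphs in the definition to be the canonical tree $\T_{d,r}$ shows that, as a function of the labels on $B_\G(u,r)$, $f_r(u,\G,\cdot)$ is a relabelled copy of $f_r(1,\T_{d,r},\cdot)$; since the coordinates of $\bfX$ are i.i.d.\ uniform this yields $\pr_{\bfX}[f_r(u)=1\mid\G]=\alpha(f_r)$ whenever $B_\G(u,r)\cong\T_{d,r}$. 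Consequently, conditioned on $\G$, if $u$ and $v$ have disjoint depth-$r$ neighborhoods both isomorphic to $\T_{d,r}$, then $\{f_r(u)=1\}$ and $\{f_r(v)=1\}$ are determined by disjoint blocks of coordinates of $\bfX$, hence are independent, each of probability $\alpha(f_r)$.

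Call a pair $(u,v)$ \emph{good} for $\G$ if $B_\G(u,r)$ and $B_\G(v,r)$ are disjoint and each isomorphic to $\T_{d,r}$. For fixed $\G$ the previous paragraph gives $\sum_{u,v}\pr_{\bfX}[f_r(u)=1,\,f_r(v)=1\mid\G]\le \alpha(f_r)^2 n^2 + P_{\mathrm{bad}}(\G)$, where $P_{\mathrm{bad}}(\G)$ counts the non-good pairs. A pair is non-good only if $\mathrm{dist}_\G(u,v)\le 2r$, or $B_\G(u,r)\not\cong\T_{d,r}$, or $B_\G(v,r)\not\cong\T_{d,r}$; since $\G$ is $d$-regular the first case involves at most $n(1+d(d-1)^{2r})$ pairs, and the other two at most $2n\,N_{\mathrm{bad}}(\G)$ pairs, where $N_{\mathrm{bad}}(\G)=|\{u:B_\G(u,r)\not\cong\T_{d,r}\}|$. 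Proposition~\ref{prop:local-tree} together with the trivial bound $N_{\mathrm{bad}}\le n$ gives $\E[N_{\mathrm{bad}}(\G_d(n))]=o(n)$ (for each $\epsilon$ one has $\E[N_{\mathrm{bad}}]\le \epsilon n + n\cdot\pr(N_{\mathrm{bad}}\ge\epsilon n)\le 2\epsilon n$ for large $n$). Hence $\E[P_{\mathrm{bad}}(\G_d(n))]=o(n^2)$, so $\E[Z^2]\le\alpha(f_r)^2 n^2+o(n^2)$; the same estimate applied to a single vertex gives $\E[Z]=\sum_u\pr[f_r(u)=1]=\alpha(f_r)n+O(\E[N_{\mathrm{bad}}])=\alpha(f_r)n+o(n)$, and therefore $\mathrm{Var}(Z)=\E[Z^2]-(\E[Z])^2=o(n^2)$.

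To finish, Chebyshev's inequality yields $\pr(|Z-\E[Z]|\ge (\epsilon/2)n)\le \mathrm{Var}(Z)/((\epsilon/2)n)^2=o(1)$, and since $|\E[Z]-\alpha(f_r)n|=o(n)\le(\epsilon/2)n$ for large $n$, this gives $\pr(|Z-\alpha(f_r)n|\ge\epsilon n)\le\epsilon$ for all sufficiently large $n$. The only mildly delicate points are the bookkeeping that converts the ``with probability $\ge1-\epsilon$'' conclusion of Proposition~\ref{prop:local-tree} into the expectation bound $\E[N_{\mathrm{bad}}]=o(n)$, and a careful check that the formal $r$-locality definition really delivers both the restriction-to-$B_\G(u,r)$ dependence and the value $\alpha(f_r)$ for the conditional probability on tree-like neighborhoods; neither is a genuine obstacle. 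An alternative to the second moment method would be to realize $\G_d(n)$ via the configuration model and apply a bounded-differences inequality for random matchings, using that a single switch alters $B_\G(u,r)$ for only $O((d-1)^r)$ vertices and hence moves $Z$ by $O((d-1)^r)$; but the computation above is more self-contained.
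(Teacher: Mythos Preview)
Your proof is correct and follows the same route as the paper: compute the expectation via Proposition~\ref{prop:local-tree}, bound the variance using that $f_r(u)$ and $f_r(v)$ are independent over $\bfX$ whenever their $r$-neighborhoods are disjoint, and conclude with Chebyshev. The only cosmetic difference is that the paper records the sharper bound $\mathrm{Var}(Z)=O(n)$ (since for \emph{any} $d$-regular $\G$ only $O(n)$ pairs have intersecting $r$-balls, so the conditional covariance vanishes for all but $O(n)$ pairs regardless of tree-likeness), whereas you fold the tree-like condition into the definition of ``good'' pair and obtain $o(n^2)$; either bound suffices.
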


\begin{proof}
The proof follows from by the fact that the variance of $|I_{\G_d(n),\bfX}|$ is $O(n)$
and its expectation is $\alpha(f_r)n+o(n)$, and so the concentration follows
by Chebychev's inequality. The bound on the variance in turn follows from the fact that
for every graph $\G$, there are at most $O(n)$ pairs of vertices $u$ and $v$ for which
the events
$f(u,\G,\bfX)$ and $f(v,\G,\bfX)$ are not independent for random $\bfX$. Details omitted.
\end{proof}

\paragraph{The Hatami-\Lovasz-Szegedy Conjecture and our result}

We now turn to describing the Hatami-\Lovasz-Szegedy (HLS) conjecture and our result.
Recall $\alpha_d$ defined by (\ref{eq:BGTregular}). The HLS
conjecture can be stated as follows.

\begin{conj}\label{conjecture:HatamiLovaszSzegedy}
There exists a sequence of $r$-local independence functions
$f_r, r\ge 1$ such that almost surely
$I(f_r,n)$ is an independent set in $\G_d(n)$ and  $\alpha(f_r)\rightarrow \alpha_d$ as $r\rightarrow\infty$.
\end{conj}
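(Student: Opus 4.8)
The plan is to realize the conjectured sequence $f_r$ through a truncated cavity / Belief Propagation scheme for the hard-core model, tuned so that its occupation density matches the independence ratio $\alpha_d$. Concretely, fix a fugacity $\lambda=\lambda(d)>0$ and, on the depth-$r$ neighborhood $B_{\G_d(n)}(u,r)$ of a vertex $u$, run the standard hard-core BP recursion with messages initialized at the radius-$r$ sphere from the i.i.d. labels $x_v$; after $r$ iterations this yields a pseudo-marginal $p_r(u)\in[0,1]$ for $u$ being occupied, computed entirely from the labelled ball of radius $r$ around $u$, hence an $r$-local quantity. One then rounds these pseudo-marginals to a genuine $0/1$ assignment, and $f_r$ is the rounded value; by construction it is $(r+1)$-local, and $\alpha(f_r)$ equals the probability that the root of $\T_{d,r}$ is selected.

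First I would analyze the recursion on the infinite $d$-regular tree: the hard-core BP equations have a fixed point whose induced root-occupation probability is a continuous increasing function of $\lambda$, and the Bethe / first-moment heuristic identifies the value $\lambda=\lambda(d)$ at which this density equals $\tfrac{2\log d}{d}(1-o(1))=\alpha_d$ (Theorem~\ref{theorem:LargestIS}). Second, I would show that running the recursion for only $r$ steps with the random sphere initialization gives $\E_{\bfX}[p_r(1)]$ converging to this fixed-point density as $r\to\infty$, i.e. the finite-depth computation converges, uniformly enough, to the infinite-tree value. Third --- the delicate combinatorial step --- I would design the rounding so that the output really is an independent set of $\G_d(n)$: thresholding $p_r(u)$ vertex-by-vertex fails, since two adjacent vertices may both exceed the threshold, so I would use the labels as a global tie-breaker, e.g. put $u$ in $I$ only if $p_r(u)$ exceeds the threshold \emph{and} $x_u<x_v$ for every neighbour $v$ that also exceeds the threshold; this conflict-resolution remains $(r+1)$-local. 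Fourth, I would argue that resolution discards only an $o(1)$ fraction of the tentatively selected vertices, so that $\alpha(f_r)=\alpha_d(1-o(1))$, and finally transport the tree computation to $\G_d(n)$ via Propositions~\ref{prop:local-tree} and~\ref{prop:variance}, obtaining $\alpha(f_r)\to\alpha_d$.

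The hard part --- and, candidly, the step I expect to fail --- is the fourth one: showing that edge-conflicts inside the tentatively selected set are rare enough to be repaired locally without killing the density. In a random $d$-regular graph a vertex set of density close to $\tfrac{2\log d}{d}$ spans $\Theta(n)$ internal edges, resolving each conflict locally costs a constant fraction of the gain, and lowering $\lambda$ to suppress conflicts drags the density back toward $\tfrac{\log d}{d}$, the greedy ``factor $\tfrac12$'' barrier. More structurally, any $f_r$ that did succeed would be an i.i.d. factor producing near-optimal independent sets, and then the coupling construction of Theorem~\ref{thm:couple} would manufacture pairs of large independent sets whose overlap realizes every intermediate value --- in direct conflict with the overlap-gap / clustering property of Theorem~\ref{thm:cluster}. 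Hence I expect the analysis of step four to break down precisely at density $\bigl(\tfrac12+\tfrac1{2\sqrt2}\bigr)\alpha_d$: no choice of fugacity, truncation depth, or local rounding rule can push this program past that threshold. This is exactly the content of our main result, Theorem~\ref{theorem:MainResult}, which shows that the conjecture as stated is in fact false.
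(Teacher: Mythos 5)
The statement you were asked about is a conjecture, and the paper does not prove it --- the paper's entire contribution is to \emph{refute} it for all sufficiently large $d$. Your proposal correctly arrives at this conclusion: you recognize that the BP/cavity construction cannot be pushed to density $\alpha_d$, and you identify the actual obstruction exactly as the paper does, namely that a successful $f_r$ combined with the coupling construction of Theorem~\ref{thm:couple} would produce pairs of large independent sets realizing every intermediate overlap, contradicting the overlap-gap property of Theorem~\ref{thm:cluster}; this is precisely how Theorem~\ref{theorem:MainResult} is proved in Section~\ref{section:ProofOfMainResult}. So on the substance you are aligned with the paper.

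Two caveats on precision. First, ``the conjecture as stated is in fact false'' is slightly too strong: the refutation only applies for sufficiently large $d$, and the paper explicitly leaves the conjecture open for small $d$ (where, e.g., BP-type local algorithms are known to succeed for related problems on sparse \ER graphs with $d\le e$). Second, your claim that the analysis of your step four ``breaks down precisely at density $\bigl(\tfrac12+\tfrac1{2\sqrt2}\bigr)\alpha_d$'' conflates the paper's provable upper bound with the location of the true barrier: the factor $\tfrac1{2\sqrt2}$ is an artifact of the first-moment computation behind Theorem~\ref{thm:cluster} (which requires $\beta>1/\sqrt2$), and the paper conjectures the genuine threshold for local algorithms is $\tfrac12\alpha_d$. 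Nothing in the paper, nor in your sketch, locates where a \emph{specific} BP rounding scheme fails; what is proved is only that no $r$-local independence function whatsoever can exceed the stated fraction.
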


Namely, the conjecture asserts the existence of a local algorithm ($r$-local independence
function $f_r$) which is capable of producing independent sets in $\G_d(r)$
of cardinality close to the largest that exist.
For such an algorithm to be efficient the function $f_r(u,\G,\bfx)$ should also
be efficiently computable {\em uniformly}.
Even setting this issue aside, we show
that there is a limit on the power of local algorithms to find large independent sets
in $\G_d(n)$ and in particular the HLS conjecture does not hold.
Let $\hat\alpha_d=
\sup_r\sup_{f_r}\alpha(f_r)$, where the second supremum is taken
over all $r$-local independence functions $f_r$.

\begin{theorem}\label{theorem:MainResult}[Main]
For every $\epsilon>0$ and all sufficiently large $d$,
\begin{align*}
{\hat\alpha_d\over \alpha_d}\le {1\over 2}+{1\over 2\sqrt{2}}+\epsilon.
\end{align*}
That is, for every $\epsilon>0$ and for all sufficiently large $d$, a largest independent set obtainable by $r$-local functions is at most
${1\over 2}+{1\over 2\sqrt{2}}+\epsilon$ for all $r$.
\end{theorem}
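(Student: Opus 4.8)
The plan is to derive Theorem~\ref{theorem:MainResult} by combining two ingredients stated in the excerpt: the rigidity of the overlap structure of large independent sets (Theorem~\ref{thm:cluster}), and a coupling construction showing that a successful local algorithm would produce pairs of independent sets realizing a continuum of overlaps (Theorem~\ref{thm:couple}). Concretely, suppose for contradiction that $\hat\alpha_d/\alpha_d > \frac12 + \frac1{2\sqrt2} + \epsilon$. Using (\ref{eq:FriezeLimitReg}) this means there is some $r$ and some $r$-local independence function $f_r$ producing, with high probability on $\G_d(n)$, an independent set of density $(1+\beta)(\log d/d)$ with $\beta > \frac12 + \frac1{2\sqrt2}$ (after absorbing the $o(1)$ errors from Theorem~\ref{theorem:LargestIS} and Proposition~\ref{prop:variance}, and shrinking $\epsilon$ as needed for large $d$). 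This is exactly the regime in which both secondary theorems apply.

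First I would invoke the coupling theorem (Theorem~\ref{thm:couple}): from a single local algorithm $f_r$ of density $(1+\beta)(\log d/d)$, one runs $f_r$ twice on the same graph $\G_d(n)$ but with two input label sequences $\bfX, \bfY$ that are coupled to agree on a prescribed fraction of coordinates, interpolating continuously between ``identical'' and ``independent.'' By continuity of the expected overlap as the coupling parameter varies, and concentration (the variance bound underlying Proposition~\ref{prop:variance}), this yields, with high probability, pairs of independent sets $I, J$ each of density roughly $(1+\beta)(\log d/d)$ whose intersection density takes essentially every value in the interval between $(1+\beta)^2(\log d/d)^2$ (the independent-labels case, where the overlap is the product of densities) and $(1+\beta)(\log d/d)$ (the identical-labels case). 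In particular, for large $d$ this interval contains intersection values of the form $(1\pm z)(\log d/d)$ for the critical $z$ coming from the clustering statement.

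Next I would invoke the clustering/overlap theorem (Theorem~\ref{thm:cluster}): for $\beta > \frac12 + \frac1{2\sqrt2}$ and large $d$, with high probability on $\G_d(n)$ \emph{every} pair of independent sets of density $(1+\beta)(\log d/d)$ has intersection density either at least $(1+z)(\log d/d)$ or at most $(1-z)(\log d/d)$ for a suitable $z \in (0,\beta)$ — i.e., the ``forbidden interval'' $((1-z)\log d/d,\ (1+z)\log d/d)$ is realized by no pair. Since both events (the coupling producing intermediate overlaps, and the clustering forbidding them) hold with high probability, they hold simultaneously, a contradiction. One must check the arithmetic that the forbidden interval genuinely lies strictly inside the interval of overlaps produced by the coupling: the lower endpoint $(1+\beta)^2(\log d/d)^2 = o(\log d/d)$ is far below $(1-z)\log d/d$, and $(1+z)\log d/d < (1+\beta)\log d/d$ since $z < \beta$, so the coupling's overlap interval strictly straddles the gap — this is where the constant $\frac12 + \frac1{2\sqrt2}$ enters, as the threshold on $\beta$ needed for Theorem~\ref{thm:cluster} to give a nonempty gap with $z<\beta$.

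The main obstacle is the bookkeeping of the several $o(1)$ and high-probability qualifiers so that a genuine contradiction survives: the density loss from passing between $\alpha_d$ and $2\log d/d$, the $o(n)$ slack in Proposition~\ref{prop:variance}, the fact that the coupling produces overlaps only up to $o(n)$ additive error and only ``densely'' in the interval rather than hitting a prescribed point, and the need for all of this to be uniform enough that a single $z$ works. The clean way to handle this is to fix $\beta$ strictly above $\frac12+\frac1{2\sqrt2}$, choose $z$ from Theorem~\ref{thm:cluster} with strict inequalities, and note the forbidden interval has width bounded below by a constant multiple of $\log d/d$, which comfortably absorbs all the $o(1)$ errors once $d$ is large and $n\to\infty$; then the contradiction is robust. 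The remainder is the routine translation $\hat\alpha_d \le (1+\beta^*)(\log d/d)(1+o(1))$ with $\beta^* = \frac12+\frac1{2\sqrt2}$, divided by $\alpha_d = (2\log d/d)(1-o(1))$, giving the claimed ratio $\frac12 + \frac1{2\sqrt2} + \epsilon$.
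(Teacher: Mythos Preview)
Your proposal is correct and follows essentially the same route as the paper: both combine Theorem~\ref{thm:cluster} and Theorem~\ref{thm:couple} by producing, via coupling, a pair of large independent sets whose overlap lands in the forbidden interval, and then reading off the bound on $\alpha(f_r)$. One small correction: the threshold on $\beta$ in Theorem~\ref{thm:cluster} is $\beta>1/\sqrt{2}$, not $\beta>\tfrac12+\tfrac1{2\sqrt2}$; the constant $\tfrac12+\tfrac1{2\sqrt2}$ in the final ratio arises because $\alpha(f_r)\le(1+1/\sqrt{2})d^{-1}\log d$ is then divided by $\alpha_d\approx 2d^{-1}\log d$, giving $(1+1/\sqrt2)/2$.
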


Thus for all large enough $d$ there is a multiplicative gap between
$\hat\alpha_d$ and the independence ratio $\alpha_d$. That being said, our result does not rule
out that for small $d$, $\hat\alpha_d$ in fact equals $\alpha_d$, thus leaving the HLS conjecture open in this regime.

The two main ingredients in our proof of Theorem~\ref{theorem:MainResult}
both deal with the {\em overlaps} between independent sets in random
regular graphs. Informally, our first result on the size of
the overlaps shows that in random graphs the overlaps are not
of ``intermediate'' size --- this is formalized in
Theorem~\ref{theorem:ISclustering}.
We then show that we can apply any $r$-local function $f_r$ twice,
with coupled randomness, to produce two independent sets
of intermediate overlap where the size of the overlap depends on
the size of the independent sets found by $f_r$ and the level of
coupling. This is formalized in Theorem~\ref
{thm:couple}
Theorem~\ref{theorem:MainResult} follows immediately by combinig
the two theorems (and appropriate setting of parameters).

\paragraph{Overlaps in random graphs}

We now state our main theorem about the overlap of large independent
sets. We interpret the statement after we make the formal statement.

\begin{theorem}\label{theorem:ISclustering}
\label{thm:cluster}
For $\beta\in (1/\sqrt{2},1)$ and
$0 < z < \sqrt{2\beta^2 - 1}<\beta$ and
$d$, let $s = (1 + \beta)d^{-1}\log d$ and
let $K(z)$ denote the set of integers between
${(1 - z)n\log d\over d}$ and ${(1+z)n\log d \over d}$.
Then, for all large enough $d$, we have
\begin{align}\label{eq:NonOverlap}
\lim_{n\rightarrow\infty}\pr\Big( \cup_{k\in K(z)}  \Overlap(n,d,\lfloor sn \rfloor,k)\ne\emptyset \Big)=0,
\end{align}
and
\begin{align}\label{eq:NonOverlapRegular}
\lim_{n\rightarrow\infty}\pr\Big(\cup_{k\in K(z)}
\Overlap_d(n,\lfloor sn \rfloor,k)\ne\emptyset \Big)=0.
\end{align}
\end{theorem}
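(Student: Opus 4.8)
The plan is to establish both (\ref{eq:NonOverlap}) and (\ref{eq:NonOverlapRegular}) by a first moment computation over the "forbidden" overlap sizes, combined with a union bound. Since $|K(z)|\le 2zn\log d/d+1=O(n)$, Markov's inequality gives
\[
\pr\Big(\cup_{k\in K(z)}\Overlap(n,d,\lfloor sn\rfloor,k)\ne\emptyset\Big)\;\le\;\sum_{k\in K(z)}\E\big[|\Overlap(n,d,\lfloor sn\rfloor,k)|\big],
\]
and likewise for $\Overlap_d$. Hence it suffices to show that for all large $d$ there is $\delta=\delta(d,\beta,z)>0$ with $\E[|\Overlap(n,d,m,k)|]\le e^{-\delta n}$ and $\E[|\Overlap_d(n,m,k)|]\le e^{-\delta n}$ for every $k\in K(z)$, where $m=\lfloor sn\rfloor$, uniformly in $k$; then the $O(n)$ terms sum to $O(ne^{-\delta n})\to 0$.

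\textbf{The \ER case.} Fix $k\in K(z)$ and write $m=\lfloor sn\rfloor$. An ordered pair $(I,J)$ with $|I|=|J|=m$, $|I\cap J|=k$ is obtained by choosing $I\cap J$, then $I\setminus J$, then $J\setminus I$, so there are $\binom{n}{k}\binom{n-k}{m-k}\binom{n-m}{m-k}$ of them; and, by inclusion--exclusion, the number of vertex pairs lying inside $I$ or inside $J$ is $2\binom{m}{2}-\binom{k}{2}$, each an independent non-edge with probability $1-d/n$. Thus
\[
\E[|\Overlap(n,d,m,k)|]\;=\;\binom{n}{k}\binom{n-k}{m-k}\binom{n-m}{m-k}\,(1-d/n)^{2\binom{m}{2}-\binom{k}{2}}.
\]
Writing $k=c\,n\log d/d$ with $c\in[1-z,1+z]$, estimating the binomials by $\log\binom{n}{xn}=n(x\log(1/x)+x+O(x^2))$ and using $(1-d/n)^{2\binom m2-\binom k2}=\exp(-(1+o(1))\tfrac dn(m^2-\tfrac{k^2}{2}))$, one finds after $n\to\infty$, keeping the leading term in $d$,
\[
\tfrac1n\log\E[|\Overlap(n,d,m,k)|]=\big(\phi(c)+o_d(1)\big)\tfrac{(\log d)^2}{d},\qquad \phi(c)=\tfrac12(c-1)^2+\tfrac12-\beta^2,
\]
where the discarded corrections (the factors $\log(1/x)=\log d-\log\log d+O(1)$ rather than $\log d$, and the $O(x^2)$ terms) are $o((\log d)^2/d)$. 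Since $\phi$ is increasing in $|c-1|$, $\max_{|c-1|\le z}\phi(c)=\phi(1\pm z)=\tfrac12 z^2+\tfrac12-\beta^2$, which is strictly negative exactly because $z<\sqrt{2\beta^2-1}$; this is precisely where the hypothesis on $z$ is used. Hence for $d$ large, $\tfrac1n\log\E[|\Overlap(n,d,m,k)|]\le-\delta$ with $\delta=\Theta((\log d)^2/d)>0$, uniformly over $k\in K(z)$, which gives (\ref{eq:NonOverlap}).

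\textbf{The random regular case.} The combinatorial count of pairs $(I,J)$ with the prescribed intersection pattern is unchanged. For $\pr(I,J\text{ both independent in }\G_d(n))$ it is convenient to pass to the configuration (pairing) model on $dn$ half-edges, using its contiguity with the uniform $d$-regular model (equivalently, $\G_d(n)$ conditioned on simplicity is uniform and $\pr(\text{simple})=\Theta(1)$), so a pairing-model upper bound suffices up to a $\Theta(1)$ factor. With $A=I\setminus J$, $B=J\setminus I$, $C=I\cap J$ and half-edge sets $U_A,U_B,U_C$ of sizes $d(m-k),d(m-k),dk$, both sets are independent iff the random pairing puts no pair inside $U_A\cup U_C$ and none inside $U_B\cup U_C$, i.e. the only pairs allowed among $U_A\cup U_B\cup U_C$ are $U_A$--$U_B$ pairs; summing over the number $t$ of such pairs yields an exact expression. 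A Stirling estimate of this sum shows that to leading order it matches the \ER probability $\exp(-(1+o_n(1))\tfrac dn(m^2-\tfrac{k^2}{2}))$, up to factors subexponential in $n$ (and, in the double limit, lower order in $d$). So the first-moment exponent is again $(\phi(c)+o_d(1))(\log d)^2/d$, and the same argument yields (\ref{eq:NonOverlapRegular}).

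\textbf{Main obstacle.} The routine part is the \ER computation; the work is in the regular case, for two reasons. First, the pairing-model evaluation of the joint-independence probability requires the $U_A/U_B/U_C$ bookkeeping together with a careful treatment of the sum over the number $t$ of $U_A$--$U_B$ pairs (the ``a bit more tedious'' step); one must check that no value of $t$ makes the probability large enough to flip the sign of the exponent. Second --- and this applies to both cases --- because the statement lives in the double limit ($n\to\infty$ first, then $d\to\infty$), one must verify that every subleading correction is genuinely $o((\log d)^2/d)$ and that the bound $\tfrac1n\log\E[\cdot]\le-\delta$ holds uniformly over the $\Theta(n)$ values $k\in K(z)$; both follow from uniform continuity of the entropy function on the relevant compact set of ratios, but these are the points requiring care.
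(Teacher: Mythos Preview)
Your proposal is correct and follows essentially the same route as the paper: a first-moment computation plus union bound over $k\in K(z)$, with the \ER exponent reducing to $\phi(c)=\tfrac12(c-1)^2+\tfrac12-\beta^2$ (equivalently the paper's expression in $\hat z=c-1$), and the regular case handled via the configuration model with an extra parameter $t$ (the paper's $l$) counting $U_A$--$U_B$ pairings. The paper carries out the optimization over this parameter explicitly---splitting into a large-$y$ regime handled by a derivative/monotonicity argument and a small-$y$ regime where the maximizer is $y^*=(s-x)^2$, which makes the regular exponent coincide with the \ER one---so what you flag as the ``main obstacle'' is exactly the computation the paper performs in detail.
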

In other words, both in the \ER and in the random regular graph models, when $\beta>1/\sqrt{2}$, and $d$ is large enough,
with probability approaching unity as $n\rightarrow\infty$, one cannot find a pair of independent sets $I$ and $J$ with size
$\lfloor ns\rfloor$, such that their overlap (intersection) has cardinality at least ${n(1-z)\log d\over d}$ and at most ${n(1+z)\log d\over d}$.

Note that for all $\beta > 1/\sqrt{2}$, there exists $z$
satisfying $0 < z < \sqrt{2\beta^2 - 1}$ and so the theorem is
not vacuous in this setting. Furthermore as $\beta \to 1$, $z$
can be chosen arbitrarily close to $1$ making the forbidden
overlap region extremely broad.
That is, as the size of the independent sets in consideration approaches the maximum possible (namely as $\beta\uparrow 1$),
and as $d\rightarrow\infty$, we
can take $z\rightarrow 1$.
In other words, with probability approaching one, two nearly largest independent sets either overlap
almost entirely or almost do not have an intersection. This is the key result for establishing our hardness bounds for existence of local algorithms.

A slightly different version of the first of these results can be found as Lemma 12 in~\cite{coja2011independent}.
The latter paper shows that if an independent set $I$ is chosen uniformly at random from the set with size nearly $(1+\beta)n\log d/d$,
then with high probability (with respect to the choice of $I$), there exists  an empty overlap region in the sense described above. In fact, this empty
overlap region exists for every $\beta\in (0,1)$, as opposed to just $1>\beta>1/2+1/(2\sqrt{2})$ as in our case. Unfortunately, this result cannot be used for our purposes,
since this result does not rule out the existence of rare sets $I$ for which no empty overlap exists.

\paragraph{Overlapping from local algorithms}

Next we turn to the formalizing the notion of using a local
function $f_r$ twice on coupled randomness to produce overlapping
independent sets.

Fix an $r$-local independence function $f_r$.
Given a vector ${\bf X}=(X_u, 1\le u\le n)$ of variables $X_u\in [0,1]$,
recall that
$I_{\G}(f_r,{\bf X})$ denotes the independent set of $\G$ given
by $u \in
I_{\G}(f_r,{\bf X})$ if and only if $f_r(u,\G,{\bf X}) = 1$.

Recall that ${\bf X}$ is chosen according to the uniform distribution on $[0,1]^n$. Namely,
$X_u$ are independent and uniformly distributed  over $[0,1]$.
In what follows we consider some joint distributions on
pairs of vectors $({\bf X}, {\bf Y})$ such that marginal
distributions on the vector ${\bf X}$ and ${\bf Y}$ are
uniform on $[0,1]^n$, though ${\bf X}$ and ${\bf Y}$ are dependent on each other.
The intuition behind the proof of Theorem~\ref{theorem:MainResult} is as follows.
Note that if ${\bf X} = {\bf Y}$ then $I_{\G}(f_r,{\bf X})=I_{\G}(f_r,{\bf Y})$. As a result
the overlap $I_{\G}(f_r,{\bf X})\cap I_{\G}(f_r,{\bf Y})$ between $I_{\G}(f_r,{\bf X})$ and $I_{\G}(f_r, {\bf Y})$
is $\alpha(f_r)n+o(n)$ in expectation.
On the other hand, if ${\bf X}$ and ${\bf Y}$ are independent,
then the overlap between $I_{\G}(f_r,{\bf X})$ and $I_{\G}(f_r, {\bf Y})$
is $\alpha^2(f_r)n + o(n)$ in expectation,
since the decision to pick a vertex $u$ in $I$ is independent
for most vertices when ${\bf X}$ and ${\bf Y}$ are independent.
(In particular, note that if the local neighborhood around
$u$ is a tree, which according to Proposition~\ref{prop:local-tree} happens with probability approaching unity, then the two
decisions are independent, and $u \in I$ with probability $\alpha(f_r)$.)
Our main theorem shows that by coupling the variables, the
overlap can be arranged to be of any intermediate size, to
within an additive $o(n)$ factor. In particular, if $\alpha(f_r)$ exceeds ${1\over 2}+{1\over 2\sqrt{2}}$ we will be able
to show that the overlap can be arranged to be between the values  ${(1 - z)n\log d\over d}$ and ${(1 + z)n\log d\over d}$, described in Theorem~\ref{thm:cluster}
which contradicts the statement of this theorem.

\begin{theorem}\label{thm:couple}
Fix a positive integer $d$. For constant $r$, let
$f_r(u,\G,\bfx)$ be an $r$-local independence function
and let $\alpha = \alpha(f_r)$.
For every $\gamma \in [\alpha^2,\alpha]$ and $\epsilon > 0$,
and for every sufficiently large $n$, there exists a distribution
on variables $({\bf X}, {\bf Y}) \in [0,1]^n \times [0,1]^n$
such that
$$\pr_{\G_d(n),({\bf X},{\bf Y})}
\left( |I_{\G_d(n)}(f_r,{\bf X}) \cap I_{\G_d(n)}(f_r,{\bf Y})| \not\in
[(\gamma-\epsilon) n, (\gamma+\epsilon)n] \right) \leq \epsilon.$$
\end{theorem}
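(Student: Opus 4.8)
The plan is to construct the joint distribution on $({\bf X},{\bf Y})$ by a simple coin-flipping coupling and then interpolate via a continuity/intermediate-value argument. Concretely, for a parameter $p \in [0,1]$, let $\bf Z \in \{0,1\}^n$ be a vector of i.i.d. $\text{Bernoulli}(p)$ bits, independent of everything else, and let $\bf X \in [0,1]^n$ be i.i.d. uniform. Define $\bf Y$ coordinatewise by $Y_u = X_u$ if $Z_u = 1$ and $Y_u = X_u'$ if $Z_u = 0$, where $\bf X'$ is a fresh i.i.d. uniform vector independent of $(\bf X, \bf Z)$. Then the marginal of $\bf Y$ is uniform on $[0,1]^n$ as required, $\bf X$ and $\bf Y$ are identical in the coordinates where $Z_u=1$ and independent elsewhere. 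When $p=1$ we recover $\bf X = \bf Y$ and the expected overlap is $\alpha n + o(n)$; when $p = 0$ the two vectors are independent and the expected overlap is $\alpha^2 n + o(n)$ (both facts as sketched in the discussion preceding the theorem, using Proposition~\ref{prop:local-tree}). The point is that as $p$ ranges over $[0,1]$ the expected overlap sweeps continuously through $[\alpha^2 n, \alpha n] + o(n)$, so we can hit any target $\gamma n$.

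The first key step is to compute $g(p) \triangleq \lim_{n\to\infty} \frac1n \E[|I_{\G_d(n)}(f_r,{\bf X}) \cap I_{\G_d(n)}(f_r,{\bf Y})|]$ as a function of $p$ and show it is continuous with $g(0) = \alpha^2$, $g(1) = \alpha$. By Proposition~\ref{prop:local-tree}, for all but $o(n)$ vertices $u$ the neighborhood $B_{\G_d(n)}(u,r)$ is isomorphic to $\T_{d,r}$, and for such $u$ the event $u \in I_{\G}(f_r,{\bf X})\cap I_{\G}(f_r,{\bf Y})$ depends only on the pair of decorations $(X_v, Y_v)_{v \in B(u,r)}$, which are i.i.d.\ across $v$ with the coupled two-dimensional marginal described above. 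Hence this probability equals a fixed function $h(p)$ — namely the probability that $f_r$ outputs $1$ on $\T_{d,r}$ decorated by $\bf X$ \emph{and} $f_r$ outputs $1$ on $\T_{d,r}$ decorated by $\bf Y$, under the coupling. So $g(p) = h(p)$. Continuity of $h$ in $p$ follows because $h(p)$ is a polynomial in $p$: conditioning on the subset $S \subseteq B(1,r)$ of coordinates where $Z_v = 1$ (which occurs with probability $p^{|S|}(1-p)^{n_{d,r}-|S|}$ restricted to the finitely many relevant coordinates), the conditional probability that both runs output $1$ is a constant depending only on $S$. At $p=1$, $S$ is everything and $h(1) = \pr[f_r(\bf X)=1] = \alpha$; at $p=0$, the two runs are independent and $h(0) = \alpha^2$.

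The second step is to pass from expectation to concentration. One shows the variance of $|I_{\G_d(n)}(f_r,{\bf X}) \cap I_{\G_d(n)}(f_r,{\bf Y})|$ is $O(n)$ — exactly as in Proposition~\ref{prop:variance}, since for a fixed graph there are only $O(n)$ pairs $(u,v)$ whose indicator events are dependent (namely those within distance $2r$, of which there are $O(n)$ by bounded degree), and the coupling does not change this counting. Chebyshev then gives concentration around $g(p)n + o(n)$ with the required probability $\le \epsilon$ for large $n$. Combining: given $\gamma \in [\alpha^2,\alpha]$, by the intermediate value theorem applied to the continuous function $g$ there is $p^\*$ with $g(p^\*) = \gamma$; using the coupling with parameter $p^\*$ and taking $n$ large enough that both the $o(n)$ error in the mean and the Chebyshev tail are within the budget gives the claim.

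The main obstacle I anticipate is handling the $o(n)$ vertices with non-tree neighborhoods and, more importantly, confirming that $g(p)$ is genuinely continuous and actually attains every value in $[\alpha^2,\alpha]$ rather than having a gap — i.e.\ that one cannot get stuck if $h$ turns out, say, to be monotone but to jump. Since $h$ is a polynomial in $p$ of bounded degree $n_{d,r}$ with $h(0)=\alpha^2 \le \alpha = h(1)$, continuity gives surjectivity onto $[\alpha^2,\alpha]$ by the intermediate value theorem, so there is in fact no gap; but one must be slightly careful that the decorations entering $f_r$'s decision for vertex $u$ in the \emph{two} coupled runs really are governed by the same finite set of coupled coordinates, which is where the $r$-locality of $f_r$ and the tree isomorphism are used in tandem. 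The rest is a routine second-moment computation that I would not grind through here.
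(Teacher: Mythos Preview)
Your proposal is correct and follows essentially the same route as the paper: the same $p$-correlated coupling, the same identification of the overlap density with a tree-level quantity $h(p)=\gamma(p)$ having endpoints $\alpha^2$ and $\alpha$, the same Chebyshev/variance-$O(n)$ concentration, and the same intermediate-value conclusion. The only minor difference is the continuity step: the paper couples the Bernoulli choices via auxiliary uniforms $W_u$ and derives the Lipschitz bound $|\gamma(p_2)-\gamma(p_1)|\le d^{r+1}|p_2-p_1|$, whereas you observe (equivalently, and arguably more cleanly) that $h(p)$ is a polynomial of degree at most $n_{d,r}$ in $p$.
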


\section{Proof of Theorem~\ref{theorem:MainResult}}\label{section:ProofOfMainResult}

We now show how Theorems~\ref{thm:cluster} and \ref{thm:couple} immediately
imply
Theorem~\ref{theorem:MainResult}.

\begin{proof}[Proof of Theorem~\ref{theorem:MainResult}]
Fix an $r$-local function $f_r$ and let $\alpha = \alpha(f_r)$.
Fix $0 < \eta < 1$. We will prove below that for sufficiently large
$d$ we have $\alpha/\alpha_d \leq 1/2 + 1/(2\sqrt{2}) + \eta$. The theorem
will then follow.

Let $\epsilon = \frac{\eta \log d}{2d}$.
By Proposition~\ref{prop:variance} we have that almost surely
an independent set returned by $f_r$ on $\G_d(n)$ is of size
at least $(\alpha-\epsilon)n$. Furthermore for every
$\gamma \in [\alpha^2,\alpha]$ we have,
by Theorem~\ref{thm:couple}, that $\G_d(n)$ almost surely has
two independent sets $I$ and $J$, with
\begin{equation}
\label{eq:lower-bound}
|I|, |J| \geq (\alpha-\epsilon) n
\mbox{ and } |I \cap J| \in [(\gamma-\epsilon)n, (\gamma+\epsilon)n].
\end{equation}
Finally, by Theorem~\ref{theorem:LargestIS}, we have that for sufficiently
large $d$, $|I|,|J| \leq (2d^{-1}\log d)(1+ \eta)n\leq 4d^{-1}\log d n$ and so
$\alpha^2 \leq d^{-1}\log d$, allowing us to set
$\gamma = d^{-1}/\log d$.

Now we apply Theorem~\ref{thm:cluster} with $z = \epsilon d/\log d$
and
$\beta > \sqrt{\frac{1+z^2}2}$. (Note that for this choice we
have $z < 1$ and $z < \sqrt{2\beta^2 - 1} < \beta < 1$.
We will also use later the fact that for this choice
we have $\beta \leq 1/\sqrt{2} + z = 1/\sqrt{2} + \epsilon d^{-1}\log d$.)
Theorem~\ref{thm:cluster} asserts that almost surely $\G_d(n)$
has no independent sets of size at least $(1+\beta)d^{-1}\log d n$
with intersection size in $[(1-z)d^{-1}\log d n, (1+z)d^{-1}\log d n]$.
Since
$|I \cap J| \in [(\gamma-\epsilon)n, (\gamma+\epsilon)n] =
[(1-z)d^{-1}\log d n, (1+z)d^{-1}\log d n]$, we conclude that
$\min\{|I|, |J|\} \leq
(1+\beta)d^{-1}\log d n$.
Combining with Equation (\ref{eq:lower-bound}) we get that
$(\alpha - \epsilon)n \leq \min\{|I|,|J|\} \leq (1 + \beta)d^{-1}\log d n$
and so $\alpha \leq (1 + \beta)d^{-1}\log d + \epsilon$,
which by the given bound on $\beta$ yields
$\alpha \leq (1 + 1/\sqrt{2})d^{-1}\log d + 2\epsilon =
(1 + 1/\sqrt{2} + \eta) d^{-1}\log d$.
On the other hand we also have $\alpha_d \geq (2 - \eta)d^{-1}\log d$.
It follows that $\alpha/\alpha_d \leq 1/2 + 1/2\sqrt{2} + \eta$ as desired.

\end{proof}

\section{Proof of Theorem~\ref{thm:couple}}\label{sec:couple}

For parameter $p\in [0,1]$, we define the $p$-correlated distribution
on vectors of random variables $(\bfX,\bfY)$ to be the following:
Let $\bfX, \bfZ$ be independent uniform vectors over $[0,1]^n$.
Now let $Z_u = X_u$ with probability $p$ and $Y_u$ with probability
$1-p$ independently for every $u \in V(G)$.

Let $f(u,\G,\bfx)$ and $\alpha$ be as in the theorem statement.
Recall that $f(\bfx) = f(1,\T_{d,r},\bfx)$ is the decision of $f$
on the canonical tree of degree $d$ and depth $r$ rooted at
the vertex $1$.
Let $\gamma(p)$ be the probability that $f(\bfX) = 1$ and
$f(\bfY) = 1$, for $p$-correlated variables $(\bfX,\bfY)$.
As with Proposition~\ref{prop:variance} we have
the following.

\begin{lemma}
\label{lem:correlated}
For every $d$, $r$, $\epsilon > 0$ and $r$-local function $f$, for
sufficiently large $n$ we have:
$$\pr_{\G_d(n),(\bfX,\bfY)} \left( | | I_{\G_d(n)}(f,\bfX) \cap I_{\G_d(n)}(f,\bfY) | - \gamma(p)
\cdot n | \geq \epsilon n \right) \leq \epsilon,$$
where $(\bfX,\bfY)$ are $p$-correlated distributions on $[0,1]^n$.
\end{lemma}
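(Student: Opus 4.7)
The plan is to mirror the proof of Proposition~\ref{prop:variance} almost verbatim, since the only substantive change is that we have replaced a single independent set by the intersection of two independent sets produced from coupled randomness. Define $\xi_u = f(u,\G_d(n),{\bf X}) \cdot f(u,\G_d(n),{\bf Y})$ for each $u \in [n]$, so that $|I_{\G_d(n)}(f,{\bf X}) \cap I_{\G_d(n)}(f,{\bf Y})| = \sum_{u} \xi_u$. Condition on $\G_d(n)$. For any vertex $u$ with $B_{\G_d(n)}(u,r) \cong \T_{d,r}$, the $r$-locality of $f$ implies that both $f(u,\G_d(n),{\bf X})$ and $f(u,\G_d(n),{\bf Y})$ are determined solely by the restrictions of ${\bf X}$ and ${\bf Y}$ to $B_{\G_d(n)}(u,r)$. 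Since the $p$-correlated distribution is a product distribution over vertices (each coordinate pair $(X_w,Y_w)$ has a fixed joint law, independent across $w$), this restriction is identical in law to the $p$-correlated distribution on $\T_{d,r}$ used in defining $\gamma(p)$. Consequently $\E[\xi_u \mid \G_d(n)] = \gamma(p)$ exactly on tree-like vertices, and Proposition~\ref{prop:local-tree} then gives $\E[\sum_u \xi_u] = \gamma(p) n + o(n)$.

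For concentration, I would again lean on the product structure of the $p$-correlated distribution. If $u,v$ satisfy $B_{\G_d(n)}(u,r) \cap B_{\G_d(n)}(v,r) = \emptyset$, then $\xi_u$ and $\xi_v$ are functions of disjoint blocks of the coordinate pairs $(X_w,Y_w)$ and are therefore independent conditional on $\G_d(n)$; such pairs contribute zero covariance. The number of pairs whose depth-$r$ neighborhoods intersect in $\G_d(n)$ is at most $n \cdot d^{2r+1} = O(n)$ (the maximum degree of $\G_d(n)$ is $d$, so each vertex has at most $d^{2r}$ vertices within distance $2r$), each contributing $O(1)$ to the covariance. Therefore $\mathrm{Var}(\sum_u \xi_u \mid \G_d(n)) = O(n)$, and Chebyshev's inequality yields the required deviation bound with probability $1-o(1)$ over $({\bf X},{\bf Y})$, uniformly on the high-probability graph event furnished by Proposition~\ref{prop:local-tree}. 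Combining the randomness over $\G_d(n)$ and $({\bf X},{\bf Y})$ via a union bound finishes the argument.

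The only genuine (and mild) obstacle is the bookkeeping step of checking that the law of $({\bf X},{\bf Y})$ restricted to a tree-like ball around $u$ exactly matches the canonical $p$-correlated distribution on $\T_{d,r}$, so that $\E[\xi_u \mid \G_d(n)]$ equals $\gamma(p)$ on the nose rather than only approximately; this is where the product-across-vertices nature of the coupling is used crucially. Everything else is a direct translation of the Chebyshev argument sketched in Proposition~\ref{prop:variance}.
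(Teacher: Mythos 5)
Your proof is correct and follows essentially the same route as the paper's: compute the expectation via Proposition~\ref{prop:local-tree} (tree-like neighborhoods give $\E[\xi_u]=\gamma(p)$ exactly, by the product-across-vertices structure of the $p$-correlated coupling), bound the variance by $O(n)$ using that all but $O(n)$ pairs of vertices have disjoint depth-$r$ neighborhoods, and conclude by Chebyshev. Your explicit check that the restricted law of $(\bfX,\bfY)$ on a tree-like ball matches the canonical $p$-correlated distribution on $\T_{d,r}$ is a detail the paper elides but is exactly the right thing to verify.
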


\begin{proof}
By Proposition~\ref{prop:local-tree} we have that almost surely almost all
local neighborhoods are trees and so for most vertices $u$
the probability that $u$ is chosen to be in the independent sets
$I(f,\bfX)$ and $I(f,\bfY)$ is $\gamma(p)$. By linearity of
expectations we get that $\E[|I(f,\bfX) \cap  I(f,\bfY)|] = \gamma(p)\cdot n
+ o(n)$.
Again observing that most local neighborhoods are disjoint we have
that the variance of $|I(f,\bfX) \cap I(f,\bfY)|$ is $O(n)$.
We conclude, by applying the Chebychev bound, that
$|I(f,\bfX) \cap I(f,\bfY)|$ is concentrated around the expectation
and the lemma follows.
\end{proof}

We also note that for $p = 1$ and $p=0$ the quantity
$\gamma(p)$ follow immediately from their definition.

\begin{proposition}
$\gamma(1) = \alpha$ and $\gamma(0) = \alpha^2$.
\end{proposition}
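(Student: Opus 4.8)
The plan is to unwind the definitions of the $p$-correlated distribution at the two extreme values of $p$. Recall that $(\bfX, \bfY)$ is obtained by first drawing independent uniform $\bfX, \bfZ \in [0,1]^n$, then setting, coordinate by coordinate, $Z_u = X_u$ with probability $p$ and $Z_u = Y_u$ with probability $1-p$; and $\gamma(p) = \pr[f(\bfX) = 1 \text{ and } f(\bfY) = 1]$. The only subtlety is to notice that the construction is phrased in terms of $\bfX$ and $\bfZ$ (both genuinely uniform and independent), with $\bfY$ \emph{recovered} from them coordinatewise; so one should check that the marginal of $\bfY$ is indeed uniform on $[0,1]^n$ and, more importantly, track the joint law of the pair $(\bfX, \bfY)$ at $p \in \{0, 1\}$.

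First, for $p = 1$: here every coordinate satisfies $Z_u = X_u$, so $\bfZ = \bfX$ deterministically. Since $\bfY$ is the uniform vector consistent with the recipe, we get $\bfY = \bfZ = \bfX$ (more carefully: the pair $(\bfX,\bfY)$ has the law of $(\bfX,\bfX)$ for uniform $\bfX$). Then $f(\bfX) = 1$ and $f(\bfY) = 1$ is the single event $f(\bfX) = 1$, which has probability $\E_{\bfX}[f(\bfX)] = \alpha$ by the definition $\alpha = \alpha(f_r) = \frac1n \E_{\bfX}[f_r(\bfX)]$ — wait, that normalization is for sets, but here $f(\bfX) = f(1,\T_{d,r},\bfX)$ is a single Boolean, and by definition $\alpha(f_r)$ is exactly $\E_{\bfX}[f_r(\bfX)] = \pr[f(\bfX) = 1]$. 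Hence $\gamma(1) = \alpha$.

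Next, for $p = 0$: here every coordinate satisfies $Z_u = Y_u$, so $\bfZ = \bfY$, and since $\bfX$ and $\bfZ$ are independent uniform vectors, $\bfX$ and $\bfY$ are independent uniform vectors. Therefore $\pr[f(\bfX) = 1 \text{ and } f(\bfY) = 1] = \pr[f(\bfX)=1]\cdot\pr[f(\bfY)=1] = \alpha \cdot \alpha = \alpha^2$, giving $\gamma(0) = \alpha^2$. There is no real obstacle here — the whole content is bookkeeping about which auxiliary vector ($\bfX$ or $\bfZ$) plays which role at the endpoints, and recognizing that $\gamma$ evaluated at a single tree $\T_{d,r}$ rooted at vertex $1$ coincides with the "per-vertex acceptance probability" $\alpha$. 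The one place to be slightly careful is to make sure the $p$-correlated recipe, as stated, really does produce $\bfY = \bfZ$ when $p = 0$ and $\bfY = \bfX$ when $p = 1$ — i.e., that "$Z_u = X_u$ with probability $p$ or $Y_u$ with probability $1-p$" is to be read as: $\bfY$ is defined so that $\bfZ$ interpolates between $\bfX$ and $\bfY$ — so that the degenerate cases pin down $\bfY$ exactly as claimed.
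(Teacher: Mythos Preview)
Your proposal is correct and matches the paper's approach, which simply asserts that the proposition follows immediately from the definition of the $p$-correlated distribution; your argument is just this observation spelled out. Your reading of the $\bfX/\bfY/\bfZ$ recipe---that $\bfY$ is built from the independent pair $\bfX,\bfZ$ via $Y_u = X_u$ with probability $p$ and $Y_u = Z_u$ otherwise---is the intended one (the stated definition has a typo, as the later proof of the continuity lemma makes clear), and your handling of the $\alpha(f_r)$ normalization is also right.
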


Now to prove Theorem~\ref{thm:couple} it suffices to prove that
for every $\gamma \in [\alpha^2,\alpha]$ there exists a
$p$ such that $\gamma(p)  = \gamma$. We show this next
by showing that $\gamma(p)$ is continuous.

\begin{lemma}\label{lemma:gamma-p-continuous}
For every $r$, $\gamma(p)$ is a continuous function of $p$.
\end{lemma}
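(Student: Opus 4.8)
The plan is to express $\gamma(p)$ explicitly as a polynomial in $p$ by conditioning on the ``source pattern'' of the coupling, i.e., on which of the two base vectors $\bfX$ or $\bfZ$ supplied the value $Z_u$ at each node $u$ of the tree $\T_{d,r}$. Since $f_r$ is $r$-local, $f_r(\bfX)$ and $f_r(\bfY)$ depend only on the coordinates $\{X_u : u \in \T_{d,r}\}$ and $\{Y_u : u \in \T_{d,r}\}$, where $Y_u = X_u$ on the (random) set $S \subseteq V(\T_{d,r})$ of nodes that ``chose $X$'' and $Y_u = Z_u$ on the complement. First I would fix $r$, write $V = V(\T_{d,r})$ (a finite vertex set of size $n_{d,r}$), and for each subset $S \subseteq V$ let $q(S) = \pr[ f_r(\bfX) = 1 \text{ and } f_r(\bfY) = 1 \mid \text{the nodes in } S \text{ took their value from } \bfX]$. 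The key observation is that $q(S)$ does not depend on $p$ at all: conditioned on the source pattern being exactly $S$, the joint law of $(\{X_u\}_{u \in V}, \{Y_u\}_{u\in V})$ is a fixed distribution (independent uniforms on the $X$-coordinates, and on the $Y$-coordinates equal to $X$ on $S$ and fresh independent uniforms off $S$), with no reference to $p$.

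Next I would use independence of the source choices across nodes: each node $u$ independently lands in $S$ with probability $p$, so $\pr[\text{source pattern} = S] = p^{|S|}(1-p)^{|V| - |S|}$. Summing over all $2^{|V|}$ patterns gives
\begin{align*}
\gamma(p) = \sum_{S \subseteq V} p^{|S|}\,(1-p)^{|V|-|S|}\, q(S).
\end{align*}
This is a finite sum of products of continuous functions of $p$ (indeed a polynomial in $p$ of degree at most $n_{d,r}$), hence continuous on $[0,1]$. That completes the proof. One should also check measurability of $f_r$ so that $q(S)$ is well-defined — but this is given by the hypothesis that decision functions are measurable, so each event $\{f_r(\bfX) = 1, f_r(\bfY) = 1\}$ under the fixed conditional law is measurable and $q(S)$ is a genuine probability.

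I do not anticipate a serious obstacle here; the only thing requiring slight care is making the conditioning rigorous, namely checking that conditioning on the event that the per-node source choices equal a specific pattern $S$ yields the claimed product law and, crucially, that the resulting conditional probability $q(S)$ is genuinely free of $p$. Once that is pinned down, continuity (in fact polynomiality) is immediate, and combined with the preceding proposition $\gamma(1) = \alpha$, $\gamma(0) = \alpha^2$ and the intermediate value theorem, this yields the surjectivity of $\gamma$ onto $[\alpha^2,\alpha]$ needed to finish Theorem~\ref{thm:couple}.
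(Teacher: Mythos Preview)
Your proof is correct, and in fact yields a stronger conclusion than the paper's: you show $\gamma(p)$ is a polynomial in $p$ of degree at most $n_{d,r}$, whereas the paper only proves Lipschitz continuity. The approaches are related but genuinely different. The paper couples the two choices of $p$: it introduces auxiliary uniforms $W_u$ on $[0,1]$, sets $Y_u = X_u$ iff $W_u \le p$, and observes that $f_r(X_u,Z_u,W_u,p_1)$ and $f_r(X_u,Z_u,W_u,p_2)$ can differ only if some $W_u$ falls in $[p_1,p_2]$; a union bound over the at most $d^{r+1}$ vertices of $\T_{d,r}$ then gives $|\gamma(p_2)-\gamma(p_1)| \le d^{r+1}|p_2-p_1|$. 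Your argument instead conditions on the full Bernoulli source pattern $S$ and writes $\gamma(p)$ as the Bernstein-type sum $\sum_S p^{|S|}(1-p)^{|V|-|S|}q(S)$. Your route is arguably cleaner for the purpose at hand (continuity plus the intermediate value theorem), while the paper's coupling buys an explicit Lipschitz constant and avoids summing over $2^{n_{d,r}}$ patterns; either is entirely adequate here.
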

\begin{proof} Let $(W_u, u\in \T_{d,r})$ be random variables associated with nodes in $\T_{d,r}$, uniformly distributed over $[0,1]$,
which are independent for different $u$ and also independent from $X_u$ and $Z_u$.
We use $W_u$ as generators for the events $Y_u=X_u$ vs $Y_u=Z_u$. In particular, given $p$, set $Y_u=X_u$ if $W_u\le p$ and $Y_u=Z_u$ otherwise. This process is exactly
the process of setting variables $Y_u$ to $X_u$ and $Z_u$ with probabilities $p$ and $1-p$ respectively, independently for all nodes $u$.
Now fix any $p_1<p_2$,
and let $\delta<(p_2-p_1)/d^{r+1}$.
We use the notation $f_r(X_u,Z_u,W_u,p)$ to denote the value of $f_r$ when the seed variables realization is $(W_u, u\in\T_{d,r})$, and the threshold value $p$ is used.
Namely, $f_r(X_u,Z_u,W_u,p)=f_r\left(X_u\mb{1}\{W_u\le p\}+Z_u\mb{1}\{W_u>p\}, u\in\T_{d,r}\right)$.
Here, for ease of notation,
the reference to the tree $\T_{d,r}$ is dropped. Utilizing this notation we have
\begin{align*}
\gamma(p)=\pr\left(f_r(X_u)=f_r(X_u,Z_u,W_u,p)=1\right).
\end{align*}
Therefore,
\begin{align*}
\gamma(p_2)-\gamma(p_1)&=\pr\left(f_r(X_u)=f_r(X_u,Z_u,W_u,p_2)=1\right)-\pr\left(f_r(X_u)=f_r(X_u,Z_u,W_u,p_1)=1\right)\\
&=\E[f_r(X_u)f_r(X_u,Z_u,W_u,p_2)-f_r(X_u)f_r(X_u,Z_u,W_u,p_1)].
\end{align*}
Observe that the event $W_u\notin [p_1,p_2]$ for all $u\in\T_{d,r}$ implies $f_r(X_u,Z_u,W_u,p_1)=f_r(X_u,Z_u,W_u,p_2)$ for every
realization of $X_u$ and $Z_u$.
Therefore, by the union bound and since $|\T_{d,r}|<d^{r+1}$, we have
\begin{align*}
|\gamma(p_2)-\gamma(p_1)|\le d^{r+1}(p_2-p_1).
\end{align*}
Since $r$ is fixed, the continuity of $\gamma(p)$ is established.
\end{proof}

We are now ready to prove Theorem~\ref{thm:couple}.

\begin{proof}[Proof of Theorem~\ref{thm:couple}]
Given $\gamma \in [\alpha^2,\alpha]$ by
Lemma~\ref{lemma:gamma-p-continuous} we have that there exists a $p$
such that $\gamma = \gamma(p)$.
For this choice of $p$, let $(\bfX,\bfY)$ be a pair of $p$-correlated
distributions.
Applying Lemma~\ref{lem:correlated} to this choice of $p$, we get that with
probability at least $1-\epsilon$ we have
$| I_{\G_d(n)}(f,\bfX) \cap I_{\G_d(n)}(f,\bfY) | \in [(\gamma- \epsilon) n,
(\gamma+\epsilon)n]$ as desired.
\end{proof}

\section{Theorem~\ref{theorem:ISclustering}: Case of the \ER graph $\G(n,d/n)$}\label{section:Proof.ER}
In this section we prove Theorem~\ref{theorem:ISclustering} for the case of the
random \ER graph. Specifically we show that the overlap of two independent sets of
near maximum cardinality can not be of some intermediate sizes.

The proof is based on a simple moment argument. We first determine
the expected number of pairs of independent sets with a prescribed overlap size
and show that this expectation converges to zero as $n \to \infty$ and in fact
converges to zero exponentially fast when the overlap size falls into
the corresponding inverval. The result then follows from Markov inequality.

Fix positive integers $k\le m\le n$. Recall that $\Overlap(n,d,m,k)$ is the
set of all pairs of independent sets of cardinality $m$ with intersection size
$k$ in the random graph $\G(n,d/n)$.
It is straightforward to see that
\begin{align}\label{eq:factorials}
\E[|\Overlap(n,d,m,k)|]={n! \over  k! (m-k)! (m-k)! (n-2m+k)!}\left(1-{d\over n}\right)^{{2m-k \choose 2}-(m-k)^2}
\end{align}
Let $m=\lfloor ns \rfloor$, where we remind that $s = (1+\beta)d^{-1}\log d$
is given by the statement of the theorem.
Set $k=\lfloor nx \rfloor$ for any
\begin{align}\label{eq:xinterval}
x\in \left({(1-z)\log d\over d},{(1+z)\log d\over d}\right)
\end{align}
It suffices to show that there exists $\gamma>0$ such that
\begin{align}\label{eq:UpperBoundGamma}
\limsup_{n\rightarrow\infty}n^{-1}\log\E[|\Overlap(n,d,\lfloor ns \rfloor,\lfloor nx \rfloor)|]\le -\gamma,
\end{align}
for all $x$ in the interval (\ref{eq:xinterval}),
as then we can use a union bound on the integer choices
\begin{align*}
k\in \left(n{(1-z)\log d\over d},n{(1+z)\log d\over d}\right).
\end{align*}
From this point on we ignore $\lfloor\cdot\rfloor$ notation for the ease of exposition. It should be clear that this does not affect the argument. From~(\ref{eq:factorials}), after simplifying using Stirling's approximation ($a! \approx (a/e)^a$) and the fact that
$\ln (1 - y) \approx - y$ as $y \to 0$, we have
\begin{align}
\limsup_nn^{-1}\log&\E[|\Overlap(n,d,\lfloor ns \rfloor,\lfloor nx \rfloor)|] \notag\\
&=x\log x^{-1}+2(s-x)\log(s-x)^{-1}+(1-2s+x)\log(1-2s+x)^{-1} \notag\\
&-d\left({(2s-x)^2\over 2}-(s-x)^2\right)\label{eq:FinalMax}
\end{align}
We further simplify this expression as
\begin{align*}
x\log x^{-1}+2(s-x)\log(s-x)^{-1}+(1-2s+x)\log(1-2s+x)^{-1}
-ds^2+dx^2/2.
\end{align*}
We have from (\ref{eq:xinterval}) that for large enough $d$
\begin{align*}
x^{-1}&\le d.
\end{align*}
Also, for large enough $d$, since $z<\beta$, then
\begin{align*}
(s-x)^{-1}&\le \left({(1+\beta)\log d\over d}-{(1+z)\log d\over d}\right)^{-1}\le d.
\end{align*}
Finally, we use the following following asymptotics valid as $d\rightarrow\infty$:
\begin{align}
(1-2s+x)\log(1-2s+x)^{-1}=O\left({\log d\over d}\right), \label{eq:OneMinus2sPlusx}
\end{align}
which applies since $0\leq x\le s=O(\log d/d)$. Substituting the expression for $s = (1+\beta)d^{-1}\log d$,  we obtain a bound
\begin{align*}
n^{-1}\log\E[|\Overlap(ns,nx)|]&\le x\log d+2\left({(1+\beta)\log d\over d}-x\right)\log d+O(\log d/d) \\
&-d\left({(1+\beta)\log d\over d}\right)^2+dx^2/2.
\end{align*}
Writing $x=(1+\hat z)\log d/d$,
where according to (\ref{eq:xinterval}) $\hat z$ varies in the interval $[-z,z]$,
we can conveniently rewrite our bound as
\begin{align*}
{\log^2d \over d}\Big( 2(1+\beta)-(1+\beta)^2-(1+\hat z)+(1+\hat z)^2/2\Big)+O(\log d/d).
\end{align*}
Now we can force the expression to be negative for large enough $d$, provided that
\begin{align*}
2(1+\beta)-(1+\beta)^2-(1+\hat z)+(1+\hat z)^2/2<0,
\end{align*}
which is equivalent to $|\hat z|<\sqrt{2\beta^2-1}$ which in turn follows
from the conditions on $z$
in the hypothesis of the theorem statement.

This completes the proof of (\ref{eq:NonOverlap}) and thus the proof of the theorem
for the case of \ER graph.

\section{Theorem~\ref{theorem:ISclustering}: Case of the random regular graph $\G_d(n)$}\label{section:RandomRegGraph}
We now turn to the case of random regular graphs $\G_n(d)$. We use a configuration model of $\G_d(n)$~\cite{BollobasBook},\cite{JansonBook},
which is obtained by replicating each of the $n$
nodes of the graph $d$ times, and then creating a  random uniformly chosen matching connecting these $dn$ nodes. Since $nd$ is assumed to be even, such a matching
exists. Then for every two nodes  $u,v\in [n]$
an edge is created between $u$ and $v$, if there exists at least one edge between any of the replicas of $u$ and any of the replicas of $v$. This step
of creating edges between nodes in $[n]$ from the matching on $nd$ nodes we call projecting.
It is known that, conditioned on the absence of loops and parallel edges, this gives a model of a random regular graph. It is also known
that the probability of appearing of at least one loop or at least two parallel edges is bounded away from zero when $d$ is bounded. Since we are only concerned with statements
taking place with high probability, such a conditioning is irrelevant to us and thus we assume that $\G_d(n)$ is obtained simply by taking a random uniformly chosen matching
and projecting.
The configuration model is denoted by $\bar \G_d(n)$, with nodes denoted by $(i,r)$ where $i=1,2,\ldots,n$ and $r=1,\ldots,d$. Namely,
$(i,r)$ is the $r$-th replica of node $i$ in the original graph. Given any set $A\subset [n]$, let
$\bar A$ be the natural extension of $A$ into the configuration model. Namely $\bar A=\{(i,r): i\in I, r=1,\ldots,d\}$.

Recall that $\Overlap_d(n,m,k)$ stands for the set of pairs of independent sets $I,J$
in $\G_d(n)$ such
that $|I|=|J|=m$ and $|I\cap J|=k$.
Note that there are possibly some edges between $\bar I\setminus \bar J$ and $\bar J\setminus \bar I$
resulting in edges between $I\setminus J$ and $J\setminus I$. Let
$\ROverlap(m,k,l)\subset \Overlap_d(n,m,k)$ be the set of pairs $I,J$ such that the number of edges
between $\bar I\setminus \bar J$ and $\bar J\setminus \bar I$ in the configuration graph model $\bar \G_d(n)$ is exactly $l$.
Here, for the ease of notation we dropped the references to $d$ and $n$. Observe that $l$ is at most $d(m-k)$
and $\cup_{l =0}^{d(m-k)} \ROverlap(m,k,l) = \Overlap_d(n,m,k)$. In what follows we will bound the expected size
of $\ROverlap(m,k,l)$ for every $l$, and thus the expected size of their union.

For $(I,J) \in \ROverlap(m,k,l)$ the number of edges between the set $I\cup J$ and its complement $[n]\setminus (I\cup J)$ is precisely
$(2m-k)d-2l$, since $|I\cup J|=2m-k$. The same applies to the configuration model: the number of edges between $\bar I\cup\bar J$ and its complement
$[nd]\setminus (\bar I\cup\bar J)$ is precisely $(2m-k)d-2l$.
The value of  $\E[|\ROverlap(m,k,l)|]$ is then computed as follows.  Let $R=2m-k$ and $l\le d(m-k)$.
\begin{lemma}\label{lemma:Amkl}
\begin{align*}
\E|\ROverlap(m,k,l)|&={n\choose k, m-k, m-k, n-R}{md-kd \choose l}^2{nd-Rd \choose Rd-2l}l!(Rd-2l)!\times\\
&\times {(nd-2Rd+2l)!\over (nd/2-Rd+l)!2^{nd/2-Rd+l}}{(nd/2)!2^{nd\over 2}\over (nd)!}.
\end{align*}
\end{lemma}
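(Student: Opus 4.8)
The plan is to compute $\E|\ROverlap(m,k,l)|$ by a direct counting argument in the configuration model $\bar\G_d(n)$, viewing the expectation as (number of valid ``configurations'') $\times$ (probability each is realized by the random matching). The random matching on the $nd$ half-edges is one of $(nd)!/((nd/2)!\,2^{nd/2})$ perfect matchings, each equally likely; this explains the last factor $(nd/2)!\,2^{nd/2}/(nd)!$ in the statement.

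First I would choose the combinatorial data specifying a pair $(I,J)\in\ROverlap(m,k,l)$: a partition of $[n]$ into the four blocks $I\cap J$, $I\setminus J$, $J\setminus I$, and the rest, of sizes $k$, $m-k$, $m-k$, $n-R$ with $R=2m-k$ --- contributing the multinomial $\binom{n}{k,\,m-k,\,m-k,\,n-R}$. Next, within the configuration model I must describe which half-edges carry the $l$ cross edges between $\bar I\setminus\bar J$ and $\bar J\setminus\bar I$: choose $l$ of the $(m-k)d$ half-edges on the $I\setminus J$ side and $l$ of the $(m-k)d$ on the $J\setminus I$ side, giving $\binom{md-kd}{l}^2$, and then match these two sets of $l$ half-edges to each other in $l!$ ways. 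Then I choose which half-edges leave $\bar I\cup\bar J$ for its complement: there must be exactly $Rd-2l$ such half-edges on the $\bar I\cup\bar J$ side, but these can only come from $\bar I\cup\bar J$ half-edges not already used for cross edges --- and since $I,J$ are independent sets there are no edges internal to $\bar I$, $\bar J$, or between $\bar I\cap\bar J$ and either side, so \emph{every} remaining half-edge incident to $\bar I\cup\bar J$ (there are $Rd-2l$ of them) must go to the complement. Choose the $Rd-2l$ endpoints on the complement side, which has $nd-Rd$ half-edges, in $\binom{nd-Rd}{Rd-2l}$ ways, and match these to the $Rd-2l$ boundary half-edges in $(Rd-2l)!$ ways. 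Finally the remaining $nd-2Rd+2l$ half-edges (all inside the complement $[nd]\setminus(\bar I\cup\bar J)$) are matched arbitrarily among themselves, in $(nd-2Rd+2l)!/((nd/2-Rd+l)!\,2^{nd/2-Rd+l})$ ways.

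Multiplying the combinatorial count --- $\binom{n}{k,m-k,m-k,n-R}\binom{md-kd}{l}^2 l!\binom{nd-Rd}{Rd-2l}(Rd-2l)!\cdot(nd-2Rd+2l)!/((nd/2-Rd+l)!\,2^{nd/2-Rd+l})$ --- by the per-matching probability $(nd/2)!\,2^{nd/2}/(nd)!$ gives exactly the claimed formula, once one notes that each matching counted this way realizes a distinct ordered pair $(I,J)$ with the prescribed overlap data (the edge-projection step from the configuration model to the simple graph does not change independence or overlap sizes, and we are working with $\bar\G_d(n)$ directly as justified in the text). The one subtlety --- and the step I expect to require the most care --- is the bookkeeping of half-edges incident to $\bar I\cup\bar J$: one must verify that independence of $I$ and $J$ forces \emph{all} non-cross half-edges on that side to exit to the complement (no slack, no internal edges), so that the count $Rd-2l$ of boundary half-edges is forced rather than a free parameter, and that the $l!$, $(Rd-2l)!$ matching factors are not double-counting. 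Everything else is a routine application of the standard ``number of matchings on $t$ labeled points equals $t!/((t/2)!\,2^{t/2})$'' identity applied to the closed-off complement.
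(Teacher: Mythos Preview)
Your proposal is correct and follows essentially the same approach as the paper's proof: both count matchings in the configuration model by first choosing the partition of $[n]$, then the endpoints and matching of the $l$ cross edges, then the endpoints and matching of the $Rd-2l$ boundary edges, then the matching on the remaining $nd-2Rd+2l$ complement half-edges, and finally divide by the total number of matchings. Your explicit observation that independence of $I$ and $J$ forces all non-cross half-edges on $\bar I\cup\bar J$ to exit to the complement is exactly the point the paper makes (just before the lemma statement) when it notes the boundary has precisely $(2m-k)d-2l$ edges.
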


\begin{proof}
The proof is based on the fact that the number of matchings on a set of $m$ nodes (for even $m$) is ${m!\over (m/2)!2^{m\over 2}}$.
So the term ${(nd/2)!2^{nd\over 2}\over (nd)!}$ is precisely the inverse of the number of configuration graphs $\bar \G_d(n)$. The term
${n\choose k, m-k, m-k, n-R}$ is the number of ways of selecting a pair of sets $I$ and $J$ with cardinality $m$ each and intersection size $k$.
Finally,
\begin{align*}
{md-kd \choose l}^2{nd-Rd \choose Rd-2l}l!(Rd-2l)!
{(nd-2Rd+2l)!\over (nd/2-Rd+l)!2^{nd/2-Rd+l}}
\end{align*}
is the number of graphs $\G_d(n)$ such that for a given choice of sets $I$ and $J$, both sets are independent sets, and the number of edges between
$I\setminus J$ and $J\setminus I$ is $l$. Here ${md-kd \choose l}^2$ represents the number of choices for end points of the $l$ edges between $I\setminus J$ and $J\setminus I$;
$l!$ represents the number of matchings once these choices are made;
${nd-Rd \choose Rd-2l}$ represents the number of choices for the end points of edges connecting $I\cup J$ with its complement;
$(Rd-2l)!$ represents the number of matchings once these choices are made; and finally
\begin{align*}
{(nd-2Rd+2l)!\over (nd/2-Rd+l)!2^{nd/2-Rd+l}}
\end{align*}
represents the number of matching choices between the remaining $nd-2Rd+2l$ nodes in the complement of $\bar I\cup \bar J$.
\end{proof}

We write $k=xn, m=sn, l=dyn$, where $x\le s\le 1$. Then $R=(2s-x)n$ and $y\le s-x$. Our main goal is establishing the following analogue of
(\ref{eq:UpperBoundGamma}).

\begin{lemma}\label{lemma:UpperBoundGammaReg}
There exists $\gamma>0$ such that
\begin{align}\label{eq:UpperBoundGammaReg}
\limsup_{n\rightarrow\infty}n^{-1}\log\E[|\ROverlap(\lfloor ns \rfloor,\lfloor nx \rfloor,\lfloor ny \rfloor)|]\le -\gamma,
\end{align}
for $s=(1+\beta)d^{-1}\log d$, for all $x$ in the interval (\ref{eq:xinterval}) and all $0\le y\le s-x$.
\end{lemma}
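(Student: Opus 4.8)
The plan is to repeat the moment computation from the \ER section, but now starting from the exact formula in Lemma~\ref{lemma:Amkl}. First I would apply Stirling's approximation $a!\approx (a/e)^a$ to every factorial appearing in the expression for $\E|\ROverlap(m,k,l)|$, substitute $k=xn$, $m=sn$, $l=dyn$, $R=(2s-x)n$, and collect the exponential rate. Writing $H(\cdot)$ for the entropy-type terms, one finds
\begin{align*}
n^{-1}\log\E|\ROverlap(ns,nx,ndy)|
&= \big[x\log x^{-1}+2(s-x)\log(s-x)^{-1}+(1-R)\log(1-R)^{-1}\big]\\
&\quad + \phi_d(s,x,y)+o(1),
\end{align*}
where $\phi_d$ collects all the contributions coming from the configuration-model matching factors (the terms involving $d$, $l$, $Rd-2l$, etc.), and $R=2s-x$ here abuses notation for $(2s-x)$. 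The first bracket is exactly the ``set-selection'' rate that already appeared in (\ref{eq:FinalMax}) for the \ER case; the new work is in understanding $\phi_d$.

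Second, I would analyze $\phi_d(s,x,y)$ as a function of $y\in[0,s-x]$ for fixed $s,x$. Since all the matching factors are ratios of factorials of quantities that are $\Theta(n)$, $\phi_d$ is a smooth concave-looking function of $y$, and I expect it is maximized at the ``unconstrained'' value $y^\ast$ where edges between $I\setminus J$ and $J\setminus I$ occur at the natural density predicted by the configuration model, i.e. $y^\ast\approx d(s-x)^2$ (up to lower-order corrections in $s,x$, which are $O(\log d/d)$). Because $s,x=\Theta(\log d/d)$, this $y^\ast$ is $\Theta(\log^2 d/d)$, hence negligible at the scale $\log^2 d/d$ at which the first bracket lives; more importantly $\phi_d(s,x,y)\le\phi_d(s,x,y^\ast)$ for all $y$, and $\phi_d(s,x,y^\ast)$ contributes at most $O(\log d/d) + $ terms that combine with the $-ds^2$ term from the complement matchings. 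In fact the cleanest route is to bound $\phi_d(s,x,y)$ for every $y$ by its value summed over $y$ being a polynomial-in-$n$ loss (there are at most $dn(s-x)+1$ integer values of $l$), so a union bound over $l$ costs nothing exponentially, and then to show $\max_y \phi_d(s,x,y)$ differs from the \ER exponent $-ds^2+dx^2/2$ (see the display after (\ref{eq:FinalMax})) by only $O(\log d/d)$.

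Third, with that reduction in hand, the rate is, to leading order in $d$, exactly the \ER rate: writing $x=(1+\hat z)\log d/d$ with $\hat z\in[-z,z]$, it equals
\begin{align*}
{\log^2 d\over d}\Big(2(1+\beta)-(1+\beta)^2-(1+\hat z)+(1+\hat z)^2/2\Big)+O(\log d/d),
\end{align*}
which is strictly negative for large $d$ precisely when $|\hat z|<\sqrt{2\beta^2-1}$, i.e. under the hypothesis on $z$. This gives (\ref{eq:UpperBoundGammaReg}) and hence, by Markov's inequality and a union bound over $k\in K(z)$ and over $l$, the statement (\ref{eq:NonOverlapRegular}) of Theorem~\ref{thm:cluster}.

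The main obstacle I anticipate is controlling $\phi_d(s,x,y)$ uniformly over all $y\in[0,s-x]$ and confirming that the contribution of the configuration-model matching factors never overwhelms the $-ds^2+dx^2/2$ term — in particular, that the $l$-dependent factors $\binom{md-kd}{l}^2 l!$ and the complement-matching factor do not introduce a positive term of order $\log^2 d/d$. Carefully expanding these ratios (using $\log d/d\to 0$ so that $\log(1-cR)^{-1}=cR+O(R^2)$ throughout, and that $s-x$ may be as large as $\Theta(\log d/d)$ or as small as $0$) and checking that everything beyond the \ER exponent is $O(\log d/d)$ is the technically tedious but conceptually routine part; it is exactly the place where the regular-graph calculation is ``a bit more tedious'' than the \ER one, as noted in the introduction.
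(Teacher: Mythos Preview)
Your proposal is correct and follows essentially the same route as the paper: apply Stirling to the formula of Lemma~\ref{lemma:Amkl}, optimize the resulting rate over $y\in[0,s-x]$, and show that at the optimum the expression reduces (up to $o(\log^2 d/d)$) to the \ER rate~(\ref{eq:FinalMax}), whence the conclusion follows from the \ER analysis. The paper carries out the optimization by splitting into a ``large $y$'' regime where a derivative sign shows the rate is decreasing, and a ``small $y$'' regime $y=O(\log^2 d/d^2)$ where Taylor expansion isolates the $y$-dependent part as a concave function maximized at $y^\ast=(s-x)^2$ with value $d(s-x)^2$; combining with the $-\tfrac{d}{2}(2s-x)^2$ term gives exactly $-ds^2+dx^2/2$.

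One slip to flag: your heuristic gives $y^\ast\approx d(s-x)^2$, but the correct optimizer (in the paper's parametrization $l=dyn$) is $y^\ast=(s-x)^2=\Theta(\log^2 d/d^2)$, not $\Theta(\log^2 d/d)$. With your value one would have $y^\ast>s-x$ for large $d$, pushing the maximum to the boundary and breaking the reduction. This is only a bookkeeping error in the scaling of $l$ versus $y$, not a flaw in the strategy.
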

The claim (\ref{eq:NonOverlapRegular}) of Theorem~\ref{theorem:MainResult}
follows from Lemma~\ref{lemma:UpperBoundGammaReg} by an argument similar to the one for the \ER graph.
The rest of this section is devoted to proving Lemma~\ref{lemma:UpperBoundGammaReg}.

By Lemma~\ref{lemma:Amkl}, we have
\begin{eqnarray*}
\E[|\ROverlap(m,k,l)|]&= &{n\choose k, m-k, m-k, n-R}{md-kd \choose l}^2{nd-Rd \choose Rd-2l}l!(Rd-2l)!(1+o(1))\\
&& \times{(nd-2Rd+2l)^{(nd-2Rd+2l)\over 2}\over e^{(nd-2Rd+2l)\over 2}}
{e^{nd\over 2}\over (nd)^{nd\over 2}}(1+o(1))\\
&=&{n!\over k! ((m-k)!)^2 (n-R)!} {((md-kd)!)^2\over (l!)^2((md-kd-l)!)^2}
{(nd-Rd)!\over (Rd-2l)!(nd-2Rd+2l)!}\\
&&\times
l!(Rd-2l)!(nd-2Rd+2l)^{(nd-2Rd+2l)\over 2}e^{Rd-l}(nd)^{-{nd\over 2}}(1+o(1))\\
&=&{n!\over k! ((m-k)!)^2 (n-R)!} {((md-kd)!)^2\over l!((md-kd-l)!)^2}
{(nd-Rd)!\over (nd-2Rd+2l)!}\\
&& \times
(nd-2Rd+2l)^{(nd-2Rd+2l)\over 2}e^{Rd-l}(nd)^{-{nd\over 2}}(1+o(1))
\end{eqnarray*}

We now consider the logarithm of the expression above normalized by $n$.  Thus
\begin{eqnarray*}
\lefteqn{n^{-1}\log\E[|\ROverlap(m,k,l)|]}\\
&=& -x\log x-2(s-x)\log(s-x)-(1-2s+x)\log(1-2s+x)\\
&&+2(sd-xd)\log(sd-xd)-2(sd-xd)-dy\log dy+dy\\
&&-2(sd-xd-dy)\log(sd-xd-dy)+2(sd-xd-dy)\\
&&+(d-2ds+dx)\log(d-2ds+dx)-(d-2ds+dx)\\
&&-(d-4ds+2dx+2dy)\log(d-4ds+2dx+2dy)+(d-4ds+2dx+2dy)\\
&&+{1\over 2}(d-4ds+2dx+2dy)\log(d-4ds+2dx+2y)\\
&&+d(2s-x-y)-{d\over 2}\log d\\
&=&-x\log x-2(s-x)\log(s-x)-(1-2s+x)\log(1-2s+x)\\
& &+2(sd-xd)\log(sd-xd)-dy\log dy\\
& &-2(sd-xd-dy)\log(sd-xd-dy)\\
& &+(d-2ds+dx)\log(d-2ds+dx)\\
& &-{1\over 2}(d-4ds+2dx+2dy)\log(d-4ds+2dx+2dy)-{d\over 2}\log d\\
& &-2(sd-xd)+dy+2(sd-xd-dy)-(d-2ds+dx)\\
& &+(d-4ds+2dx+2dy)+d(2s-x-y)
\end{eqnarray*}
The term not involving $\log$ is easily checked to be zero. Consider terms of the form $\log(d A)=\log d+\log A$ and consider the $\log d$ terms.
The corresponding multiplier is
\begin{eqnarray*}
&2(sd-xd)-dy-2(sd-xd-dy)+(d-2ds+dx)
-{1\over 2}(d-4ds+2dx+2dy)-{d\over 2},
\end{eqnarray*}
which again is found to be zero. The final expression we obtain is then
\begin{eqnarray}
&= & -x\log x-2(s-x)\log(s-x)-(1-2s+x)\log(1-2s+x)\notag\\
& &+2d(s-x)\log(s-x)-dy\log y\notag\\
& &-2d(s-x-y)\log(s-x-y)\notag\\
& &+d(1-2s+x)\log(1-2s+x)\notag\\
& &-{d\over 2}(1-4s+2x+2y)\log(1-4s+2x+2y). \label{eq:BoundRegular}
\end{eqnarray}
We now recall that $s=(1+\beta)\log d/d$ and  $x$ lies in the interval (\ref{eq:xinterval}). We consider now two cases. Specifically, we first consider the case
\begin{align}\label{eq:yLarge}
(\beta+z+1)^2{\log^2d\over d^2} \le y\le s-x,
\end{align}
and then consider the case
\begin{align}\label{eq:ySmall}
0\le y\le (\beta+z+1)^2{\log^2d\over d^2} .
\end{align}
Assume first that (\ref{eq:yLarge}) holds.
Consider the terms containing $y$:
\begin{align*}
f(y)\triangleq -dy\log y-2d(s-x-y)\log(s-x-y)-{d\over 2}(1-4s+2x+2y)\log(1-4s+2x+2y)
\end{align*}
Then
\begin{align*}
{d^{-1}\dot f(y)}&=-\log y-1+2\log(s-x-y)+2-\log(1-4s+2x+2y)-1\\
&=-\log y+2\log(s-x-y)-\log(1-4s+2x+2y).
\end{align*}
Now by our assumption (\ref{eq:yLarge}), we have $y\ge (\beta+z+1)^2 d^{-2}\log^2d$ implying
\begin{align*}
-\log y\le -2\log(\beta+z+1)+2\log d-2\log\log d
\end{align*}
Also $4s-2x-2y\le 4s<8\log d/d=O(\log d/d)$, implying that
$\log(1-4s+2x+2y)=O(\log d/d)$. Finally, from (\ref{eq:xinterval}) we have
$s-x-y\le s-x=(\beta+z)\log d/d$, implying that $\log(s-x-y)\le -\log d+\log\log d+\log(\beta+z)$.
Combining, we obtain that
\begin{align*}
{d^{-1}\dot f(y)}&\le-2\log(\beta+z+1)+2\log d-2\log\log d-2\log d+2\log\log d+2\log(\beta+z)\\
&+O(\log d/d)\\
&=-2\log(\beta+z+1)+2\log(\beta+z)+O(\log d/d).
\end{align*}
In particular, the derivative is negative for large enough $d$ and thus the largest value of $f(y)$ when $y$ is in the interval (\ref{eq:yLarge})
is obtained at the left end of this interval. Thus, without the loss of generality, we may assume from now on that the bound (\ref{eq:ySmall}) holds.

For convenience we start with the term $(1-2s+x)\log(1-2s+x)$ in (\ref{eq:BoundRegular}).
Using the first order Taylor approximation  $\log(1-t)=-t+o(t)$,  and the fact $s=O(\log d/d), x=O(\log d/d)$,  we have
\begin{align*}
(1-2s+x)\log(1-2s+x)
&=O(\log d/d)\\
&=o(\log^2 d/d).
\end{align*}
Next we analyze the term $d(1-2s+x)\log(1-2s+x)$. Using the approximation
\begin{align*}
(1-t)\log(1-t)=-t+t^2/2+O(t^3),
\end{align*}
we obtain
\begin{align*}
d(1-2s+x)&\log(1-2s+x)
=-d(2s-x)+{d\over 2}(2s-x)^2+O(d(2s-x)^3).
\end{align*}
Before we expand this term in terms of $d$, it will be convenient to obtain a similar expansion for the last term in (\ref{eq:BoundRegular})
\begin{align*}
{d\over 2}&(1-4s+2x+2y)\log(1-4s+2x+2y)\\
&=-{d\over 2}(4s-2x-2y)+{d\over 4}(4s-2x-2y)^2+O(d(4s+2x+2y)^3)\\
&=-d(2s-x)+dy+d(2s-x)^2-2d(2s-x)y+dy^2+O(d(4s+2x+2y)^3)\\
\end{align*}
Applying the upper bound (\ref{eq:ySmall}) we have  $O(d(2s-x)^3)=O(\log^3d/d^2)=o(\log^2 d/d)$, $O(d(4s+2x+2y)^3)=o(\log^2 d/d)$, and $dy^2=O(\log^4 d/d^3)=o(\log^2 d/d)$.
Combining,  we obtain
\begin{align}
d(1-2s+x)&\log(1-2s+x)-{d\over 2}(1-4s+2x+2y)\log(1-4s+2x+2y)\notag\\
&=-{d\over 2}(2s-x)^2-dy+2d(2s-x)y+o(\log^2 d/d)\notag\\
&=-{d\over 2}(2s-x)^2-dy+o(\log^2 d/d), \label{eq:dy}
\end{align}
where again applying bound (\ref{eq:ySmall}) on $y$ we have used
\begin{align*}
2d(2s-x)y=O\left(d{\log d\over d}{\log^2 d\over d^2}\right)=o(\log^2 d/d).
\end{align*}

Next it is convenient to analyze the following two terms together:
\begin{eqnarray*}
\lefteqn{2d(s-x)\log(s-x)-2d(s-x-y)\log(s-x-y)}\\
&= &2d(s-x)\log(s-x)-2d(s-x)\log(s-x-y)+2dy\log(s-x-y)\\
&=& 2d(s-x)\log(s-x)-2d(s-x)\log(s-x)-2d(s-x)\log(1-y(s-x)^{-1})\\
& & + 2dy\log(s-x)-2dy\log(1-y(s-x)^{-1}))\\
&=& -2d(s-x)\log(1-y(s-x)^{-1})+2dy\log(s-x)-2dy\log(1-y(s-x)^{-1}))\\
&=& 2d(s-x)y(s-x)^{-1}+O(dy^2(s-x)^{-1})\\
& &+2dy\log(s-x)+2dy^2(s-x)^{-1}+O(dy^3(s-x)^{-2})\\
&=& 2dy+2dy\log(s-x)+2dy^2(s-x)^{-1}+O(dy^2(s-x)^{-1})+O(dy^3(s-x)^{-2})\\
&=& 2dy+2dy\log(s-x)+o(\log^2 d/d),
\end{eqnarray*}
where we have used the asymptotics $y=O(\log^2 d/d^2)$ implied by (\ref{eq:ySmall}) in the last equality.

We now analyze the remaining terms involving $y$. From (\ref{eq:dy}) we have the term $-dy$. Combining with the asymptotics above
and the remaining term $-dy\log y$ from (\ref{eq:BoundRegular}) we obtain
\begin{align}\label{eq:Maxy}
2dy+2dy\log(s-x)-dy-dy\log y=dy+2dy\log(s-x)-dy\log y.
\end{align}
We compute the maximum value of this quantity in the relevant range of $y$ given by (\ref{eq:ySmall}). The first derivative  of this expression is
\begin{align*}
d+2d\log(s-x)-d-d\log y=2d\log(s-x)-d\log y
\end{align*}
which is positive (infinite) at $y=0$. At $y=(\beta+z+1)^2\log^2 d/d^2$,
the first derivative is
\begin{align*}
2d\log(s-x)&-2d\log(\beta+z+1)-2d\log\log d+2d\log d\\
&\le 2d\log(\beta+z)+2d\log\log d-2d\log d-2d\log(\beta+z+1)-2d\log\log d+2d\log d\\
&=2d\log(\beta+z)-2d\log(\beta+z+1)\\
&<0,
\end{align*}
where the inequality relies on $x\ge (1-z)\log d/d$ implied by (\ref{eq:xinterval}), which gives
\begin{align*}
s-x\le (\beta+z)\log d/d.
\end{align*}
The second derivative is $-d/y$ which is negative since $y\ge 0$. Thus, the function is strictly concave with positive and negative derivatives
at the ends of the relevant interval (\ref{eq:ySmall}). The maximum is then achieved at the unique point $y^*$ where the derivative is zero, namely when $2d\log(s-x)-d\log y^*=0$, giving
\begin{align*}
y^*=(s-x)^2.
\end{align*}
Plugging this into the right-hand side of (\ref{eq:Maxy}) we obtain
\begin{align*}
dy^*+&2dy^*\log(s-x)-dy^*\log y^*\\
&=d(s-x)^2+2d(s-x)^2\log(s-x)-d(s-x)^2\log(s-x)^2\\
&=d(s-x)^2.
\end{align*}
Summarizing, and using (\ref{eq:dy}), we find that the expression in (\ref{eq:BoundRegular}) is at most
\begin{align*}
&=-x\log x-2(s-x)\log(s-x)-{d\over 2}(2s-x)^2+d(s-x)^2+o(\log^2 d/d),
\end{align*}
which is precisely the expression (\ref{eq:FinalMax}) we have derived for the case of \ER graph $\G(n,c/n)$,
with the exception of the term $(1-2s+x)\log(1-2s+x)$, which is $o(\log^2 d/d)$ by (\ref{eq:OneMinus2sPlusx}).
We have obtained the expression we have analyzed for the case of graphs $\G(n,c/n)$, for which we have shown that the expression is
negative for the specified choices of $s$ and $x$ for sufficiently large $d$. This completes the proof of Lemma~\ref{lemma:UpperBoundGammaReg} and of
Theorem~\ref{theorem:ISclustering}.

\section*{Acknowledgements}
The first author wishes to thank Microsoft Research New England for
providing exciting and hospitable environment during his visit in 2012 when this work was conducted.
The same author also wishes acknowledge the general support of NSF grant CMMI-1031332.

\newcommand{\etalchar}[1]{$^{#1}$}
\providecommand{\bysame}{\leavevmode\hbox to3em{\hrulefill}\thinspace}
\providecommand{\MR}{\relax\ifhmode\unskip\space\fi MR }
\providecommand{\MRhref}[2]{%
  \href{http://www.ams.org/mathscinet-getitem?mr=#1}{#2}
}
\providecommand{\href}[2]{#2}


\begin{thebibliography}{ACORT11}

\bibitem[ACORT11]{AchlioptasCojaOghlanRicciTersenghi}
D.~Achlioptas, A.~Coja-Oghlan, and F.~Ricci-Tersenghi, \emph{On the solution
  space geometry of random formulas}, Random Structures and Algorithms
  \textbf{38} (2011), 251--268.

\bibitem[Ald]{Aldous:FavoriteProblemsNew}
D.~Aldous, \emph{Some open problems.
  \emph{http://www.stat.berkeley.edu/$\sim$aldous/ Research/OP/index.html}}.

\bibitem[AS92]{AlonSpencer}
N.~Alon and J.~Spencer, \emph{Probabilistic method}, Wiley, 1992.

\bibitem[BCG13]{borgs2013convergent}
Christian Borgs, Jennifer Chayes, and David Gamarnik, \emph{Convergent
  sequences of sparse graphs: A large deviations approach}, arXiv preprint
  arXiv:1302.4615 (2013).

\bibitem[BCL{\etalchar{+}}12]{BorgsChayesEtAlGraphLimitsII}
C.~Borgs, J.T. Chayes, L.~Lov\'{a}sz, V.T. S\'{o}s, and K.~Vesztergombi,
  \emph{Convergent graph sequences {II}: Multiway cuts and statistical
  physics}, Ann. of Math. \textbf{176} (2012), 151--219.

\bibitem[BCL{\etalchar{+}}08]{BorgsChayesEtAlGraphLimitsI}
\bysame, \emph{Convergent graph sequences {I}: Subgraph frequencies, metric
  properties, and testing}, Advances in Math. \textbf{219} (208), 1801--1851.

\bibitem[BCLK]{BorgsChayesKahnLovasz}
C.~Borgs, J.T. Chayes, L.~Lov\'{a}sz, and J.~Kahn, \emph{Left and right
  convergence of graphs with bounded degree}, http://arxiv.org/abs/1002.0115.

\bibitem[BGT10]{BayatiGamarnikTetali}
M.~Bayati, D.~Gamarnik, and P.~Tetali, \emph{Combinatorial approach to the
  interpolation method and scaling limits in sparse random graphs}, Annals of
  Probability, to appear. Conference version in Proc. 42nd Ann. Symposium on
  the Theory of Computing (STOC) (2010).

\bibitem[Bol85]{BollobasBook}
B.~Bollobas, \emph{Random graphs}, Academic Press, Inc., 1985.

\bibitem[CL12]{csoka2012invariant}
E.~Csoka and G.~Lippner, \emph{Invariant random matchings in cayley graphs},
  arXiv preprint arXiv:1211.2374 (2012).

\bibitem[CO11]{coja2011belief}
A.~Coja-Oghlan, \emph{On belief propagation guided decimation for random
  k-sat}, Proceedings of the Twenty-Second Annual ACM-SIAM Symposium on
  Discrete Algorithms, SIAM, 2011, pp.~957--966.

\bibitem[COE11]{coja2011independent}
A.~Coja-Oghlan and C.~Efthymiou, \emph{On independent sets in random graphs},
  Proceedings of the Twenty-Second Annual ACM-SIAM Symposium on Discrete
  Algorithms, SIAM, 2011, pp.~136--144.

\bibitem[EL10]{elek2010borel}
G.~Elek and G.~Lippner, \emph{Borel oracles. an analytical approach to
  constant-time algorithms}, Proc. Amer. Math. Soc, vol. 138, 2010,
  pp.~2939--2947.

\bibitem[F{\L}92]{frieze1992independence}
A.M. Frieze and T.~{\L}uczak, \emph{On the independence and chromatic numbers
  of random regular graphs}, Journal of Combinatorial Theory, Series B
  \textbf{54} (1992), no.~1, 123--132.

\bibitem[Fri90]{FriezeIndependentSet}
A.~Frieze, \emph{On the independence number of random graphs}, Discrete
  Mathematics \textbf{81} (1990), 171--175.

\bibitem[HKNO09]{hassidimetal}
Avinatan Hassidim, Jonathan~A. Kelner, Huy~N. Nguyen, and Krzysztof Onak,
  \emph{Local graph partitions for approximation and testing}, FOCS, IEEE
  Computer Society, 2009, pp.~22--31.

\bibitem[HLS]{HatamiLovaszSzegedy}
H.~Hatami, L.~Lov{\'a}sz, and B.~Szegedy, \emph{Limits of local-global
  convergent graph sequences}, Preprint at http://arxiv.org/abs/1205.4356.

\bibitem[J{\L}R00]{JansonBook}
S.~Janson, T.~{\L}uczak, and A.~Rucinski, \emph{Random graphs}, John Wiley and
  Sons, Inc., 2000.

\bibitem[KS81]{KarpSipser}
R.~Karp and M.~Sipser, \emph{Maximum matchings in sparse random graphs}, 22nd
  Annual Symposium on Foundations of Computer Science, 1981, pp.~364--375.

\bibitem[Lin92]{Linial}
Nathan Linial, \emph{Locality in distributed graph algorithms}, SIAM J. Comput.
  \textbf{21} (1992), no.~1, 193--201.

\bibitem[LN11]{lyons2011perfect}
R.~Lyons and F.~Nazarov, \emph{Perfect matchings as iid factors on non-amenable
  groups}, European Journal of Combinatorics \textbf{32} (2011), no.~7,
  1115--1125.

\bibitem[LS06]{LovaszSzegedy}
L.~Lov\'{a}sz and B.~Szegedy, \emph{Limits of dense graph sequences}, Journal
  of Combinatorial Theory, Series B \textbf{96} (2006), 933--957.

\bibitem[MM09]{MezardMontanariBook}
M.~Mezard and A.~Montanari, \emph{Information, physics and computation}, Oxford
  graduate texts, 2009.

\bibitem[MMZ05]{mezard2005clustering}
M.~M{\'e}zard, T.~Mora, and R.~Zecchina, \emph{Clustering of solutions in the
  random satisfiability problem}, Physical Review Letters \textbf{94} (2005),
  no.~19, 197205.

\bibitem[NO08]{NguyenOnak}
Huy~N. Nguyen and Krzysztof Onak, \emph{Constant-time approximation algorithms
  via local improvements}, FOCS, IEEE Computer Society, 2008, pp.~327--336.

\bibitem[PR07]{ParnasRon}
Michal Parnas and Dana Ron, \emph{Approximating the minimum vertex cover in
  sublinear time and a connection to distributed algorithms}, Theor. Comput.
  Sci. \textbf{381} (2007), no.~1-3, 183--196.

\bibitem[RTVX11]{ronittetal}
Ronitt Rubinfeld, Gil Tamir, Shai Vardi, and Ning Xie, \emph{Fast local
  computation algorithms}, ICS (Bernard Chazelle, ed.), Tsinghua University
  Press, 2011, pp.~223--238.

\bibitem[Tal10]{TalagrandBook}
M.~Talagrand, \emph{Mean field models for spin glasses: Volume {I}: Basic
  examples}, Springer, 2010.

\bibitem[WJ05]{JordanWainwright}
M.~J. Wainwright and M.~I. Jordan, \emph{A variational principle for graphical
  models}, New Directions in Statistical Signal Processing: From Systems to
  Brain. Cambridge, MA: MIT Press, 2005.

\end{thebibliography}

\end{document}